\theoremstyle{plain}
\newtheorem{thm}{Theorem}[section]
\newtheorem{cor}[thm]{Corollary}
\newtheorem{lem}[thm]{Lemma}
\newtheorem{prop}[thm]{Proposition}
\theoremstyle{definition}
\newtheorem{defn}{Definition}[section]
\theoremstyle{remark}
\numberwithin{equation}{section}
\newcommand{\ex}{\text{e}}
\newcommand{\vr}{\boldsymbol{r}} 
\def\Pr{{\rm Pr}}
\begin{document}

\title{Bounds for the discrete correlation of infinite sequences on $k$ symbols and generalized Rudin-Shapiro sequences}
\author{E.~Grant, J.~Shallit, T.~Stoll}
\maketitle

\section{Introduction}

Pseudorandom sequences, i.e., deterministic sequences on finite alphabets with properties reminiscent of random sequences, are an intensively studied subject. We refer to the series of papers by Mauduit, S\'ark\"ozy and
coauthors~\cite{ACS08, CMS02, GMS04, MS97, MS98} among many others. A great part of the mentioned work deals with correlation measures for binary sequences and the problem to find large classes of finite pseudorandom binary sequences with small autocorrelation. Let $x=x_0x_1\cdots x_N\in\{-1,1\}^N$ be a finite word over the alphabet $\{-1,1\}$. Then the correlation measure of order $m$ of $x$ is defined as
\begin{equation}\label{corrfinite}
  U_m(x)=\max_{M,\vr} \left\vert \sum_{n=0}^M x_{n+r_1} x_{n+r_2}\cdots x_{n+r_m}\right\vert,
\end{equation}
where the maximum is taken over all $\vr=(r_1,r_2,\ldots,r_m)$ with $0\leq r_1<r_2<\cdots<r_m$ and $M$ such that $M+r_m\leq N$. In case of infinite words $x=x_0 x_1 \cdots$ the correlation of order $m$ is defined as
\begin{equation}\label{corrinfinite}
  V_m(x,M)=\sum_{n=0}^M x_{n+r_1} x_{n+r_2}\cdots x_{n+r_m},
\end{equation}
with fixed $\vr$. In contrast to $U_m(x)$, this definition does not take ``large-range correlations'' into account. In fact, $r_m$ could be $\Omega(N)$ for the finite word correlation~\cite{MS97}. Recently, Mauduit and S\'ark\"ozy~\cite{MS02} generalized several measures for pseudorandomness to finite sequences over $k$-letter alphabets. These distribution measures have been studied by B\'erczi~\cite{Be03} from a probabilistic point of view.

The aim of the present paper is to study the \textit{discrete correlation} among members of arbitrary infinite sequences over $k$ symbols, where we just take into account whether two symbols are identical. In the sequel, we denote by $\mathbb{N}$ the set of non-negative integers, and we assume that sums start with index $0$ (empty sums are supposed to be zero), unless otherwise stated. We further denote by $n \bmod k$ the unique integer $n'$ with $0\leq n'\leq k-1$ and $n\equiv n'$ (mod $k$). We use ``word'' and ``sequence'' interchangeably.

Let $x=x_0x_1\cdots$ be an infinite word over an alphabet of size $k$. Without loss of generality we may assume that $x_i \in \left\{0,1, \ldots, k-1\right\}$ for $i\in\mathbb{N}$. For vectors $(i_1,i_2,\ldots,i_m)$ with integers $i_j$ $(1\leq j\leq m)$ satisfying $0\leq i_1<i_2<\cdots<i_m$, define the \textit{discrete correlation coefficient $\delta(i_1,i_2,\ldots,i_m)$ of order $m$} by
$$
\delta(i_1,i_2,\ldots,i_m) = \left\{
\begin{array}{rl}
0, & \text{if } x_{i_1} = x_{i_2} = \cdots = x_{i_m} \text{;}\\
1, & \text{otherwise.}\\
\end{array} \right.
$$
Moreover, define $C_{\vr}$ for all fixed $\vr=(r_1,r_2,\ldots,r_m)$ with $0\leq r_1<r_2<\cdots<r_m$ by
\begin{equation}\label{cr}
  C_{\vr} = \liminf_{N \rightarrow \infty} \frac{1}{N}\sum_{n<N} \delta(n+r_1,n+r_2,\ldots,n+r_m).
\end{equation}

It is important to remark that for a random sequence (where every symbol is independently chosen with probability 1/k) the quantity $C_{\vr}$ equals $1-1/k^{m-1}$ with probability one. In this paper we investigate sequences with respect to this leading term. We first show by combinatorial means that for any infinite sequence on $k$ symbols the quantity $C_{\vr}$ cannot be too large for all $\vr$ (Theorem~\ref{thm1}). Our result, however, does not rule out the existence of deterministic sequences that actually attain our bound. We provide such a construction in the case of $m=2$ by introducing \textit{generalized Rudin-Shapiro sequences on $k$ symbols}, which extends a construction by Queff\'elec~\cite{Qu87} and H\o holdt, Jensen and Justesen~\cite{HJJ86, HJJ85}. The motivation stems from the fact that the autocorrelation $C_{(r_1,r_2)}$ of the infinite Rudin-Shapiro sequence on two symbols is small~\cite[Theorem~4]{MS98}. Our construction, however, gives a large class of sequences with small autocorrelation for any alphabet with cardinality $k$, whenever $k$ is prime or squarefree.

\medskip

The paper is structured as follows. In Section~\ref{sec1} we state the general bounds for the discrete correlation in Theorems~\ref{thm1} and~\ref{thm2}. In Section~\ref{sec2} we give the definition of generalized Rudin-Shapiro sequences. Sections~\ref{sec3} and~\ref{sec4} are devoted to the combinatorial proofs of Theorem~\ref{thm1} and~\ref{thm2}, respectively. In Section~\ref{sec4-LLL} we give the proof of Theorem~\ref{thm3} by using the Lov\'asz local lemma. Finally, in Sections~\ref{sec5} and~\ref{sec6} we give the proofs for Theorems~\ref{mtheo} and~\ref{mtheo2} by means of exponential sums.

\section{General bounds for the discrete correlation}\label{sec1}

We wish to establish upper bounds for $C_{\vr}$ as $\vr$ gets ``large''. To begin with, we normalize the vector $\vr$. For an integer sequence $T = (t_0, t_1, \ldots )$ with $t_i + r_1 \geq 0$ for $i\in\mathbb{N}$, we define
shifted versions of $C_{\vr}$, namely,
\begin{equation*}
C_{\vr,T} = \liminf_{N \rightarrow \infty} \frac{1}{N}\sum_{n<N} \delta(n+t_N+r_1,n+t_N+r_2,\ldots,n+t_N+r_m).
\end{equation*}

\begin{prop} \label{rprop}
Let $\vr=(r_1,r_2,\ldots,r_m)$ with $0\leq r_1<r_2<\cdots<r_m$, and let $T = (t_0, t_1, \ldots )$ be a sequence of integers with $t_i + r_1 \geq 0$ for all $i$.  If $t_N = o(N)$, then $C_{\vr,T} = C_{\vr}$.
\end{prop}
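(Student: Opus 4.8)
The plan is to recognize that both averages are running means of a single bounded, $\{0,1\}$-valued sequence over a window of length $N$, the only difference being a shift of the window by $t_N$, and then to control the effect of that shift.

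First I would set $d_n = \delta(n+r_1,n+r_2,\ldots,n+r_m)\in\{0,1\}$, which is well defined for all $n\ge -r_1$; the hypothesis $t_N+r_1\ge 0$ guarantees that every argument appearing below is a legitimate index of $x$. Writing $S(a,b)=\sum_{a\le j<b} d_j$, the two quantities become
\[
\frac1N\sum_{n<N}\delta(n+t_N+r_1,\ldots,n+t_N+r_m)=\frac1N\,S(t_N,t_N+N),\qquad
\frac1N\sum_{n<N}\delta(n+r_1,\ldots,n+r_m)=\frac1N\,S(0,N),
\]
so that $C_{\vr,T}$ and $C_{\vr}$ are the respective $\liminf$s of $u_N:=\tfrac1N S(t_N,t_N+N)$ and $v_N:=\tfrac1N S(0,N)$.

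The key step is the uniform estimate $|u_N-v_N|\le |t_N|/N$. Indeed, $S(0,N)$ and $S(t_N,t_N+N)$ are sums of $d_j$ over two length-$N$ integer windows which, once $N>|t_N|$ (true for all large $N$, since $t_N=o(N)$), overlap in a window of length $N-|t_N|$. For $t_N\ge 0$ their difference equals $S(0,t_N)-S(N,t_N+N)$, a difference of two sums each taken over $|t_N|$ consecutive indices (and there is an analogous expression when $t_N<0$). Since every $d_j$ lies in $[0,1]$, each such end-sum lies in $[0,|t_N|]$, whence $|S(0,N)-S(t_N,t_N+N)|\le |t_N|$; dividing by $N$ gives $|u_N-v_N|\le |t_N|/N\to 0$.

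Finally I would invoke the elementary fact that $u_N-v_N\to 0$ forces $\liminf_N u_N=\liminf_N v_N$: for each $\varepsilon>0$ one has $v_N-\varepsilon\le u_N\le v_N+\varepsilon$ for all large $N$, whence $|\liminf u_N-\liminf v_N|\le\varepsilon$, and letting $\varepsilon\to0$ yields equality (both liminfs being finite because $0\le u_N,v_N\le 1$). This proves $C_{\vr,T}=C_{\vr}$. The only genuine subtlety is that $\liminf$ is not additive, so one cannot simply split $u_N$ as $v_N$ plus an error term and pass to the limit termwise; the main obstacle is therefore to isolate a truly \emph{uniform} $o(1)$ bound on $|u_N-v_N|$, which is exactly what the $\{0,1\}$-boundedness of $\delta$ together with $t_N=o(N)$ delivers.
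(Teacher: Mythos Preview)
Your proof is correct and follows essentially the same approach as the paper: rewrite the shifted sum as a sum of $\delta$-values over the window $\{t_N,\ldots,t_N+N-1\}$, use $\delta\in\{0,1\}$ to bound the difference from the unshifted sum by $O(|t_N|)$, and conclude from $t_N=o(N)$. Your argument is in fact more careful than the paper's, since you obtain the sharper bound $|t_N|/N$ rather than $2t_N/N$ and explicitly justify why the $\liminf$ passes through a vanishing perturbation.
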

\begin{proof}
We note that
\begin{equation*}
C_{\vr,T} = \liminf_{N \rightarrow \infty} \frac{1}{N}\sum_{n=t_N}^{N+t_N-1} \delta(n+r_1,n+r_2,\ldots,n+r_m). \\
\end{equation*}
Since $\delta(n+r_1,n+r_2,\ldots,n+r_m) \in \{0,1\}$ for all $n$, the above sum differs from the corresponding sum in \eqref{cr} by at most $2 t_N$.  Thus if $t_N = o(N)$, then
\begin{equation*}
C_{\vr,T} = \liminf_{N \rightarrow \infty} \frac{1}{N}\left(\sum_{n<N} \delta(n+r_1,n+r_2,\ldots,n+r_m) + o(N)\right) = C_{\vr}. \qedhere
\end{equation*}
\end{proof}

By taking $T = (t,t,\ldots)$, Proposition~\ref{rprop} implies that $C_{\vr + t\mathbf{1}} = C_{\vr}$ for all constants $t \geq -r_1$.  We shall say $\vr$ is \emph{normalized} whenever $r_1 = 0$ and $r_1 < r_2 < \cdots < r_m$, and henceforth only consider normalized $\vr$.  In the $m=2$ case, we then have $\vr=(0,r_2)$ and we can establish an upper bound by taking the limit as $r_2$ approaches infinity.  We shall obtain the following result.

\begin{thm} \label{thm0} Let $x$ be an infinite word over an alphabet of size $k$.  Then
\begin{equation}\label{CRbound2}
  \liminf_{r_2 \rightarrow \infty}{C_{(0,r_2)}} \leq 1 - \frac{1}{k}.
\end{equation}
\end{thm}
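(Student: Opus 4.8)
The plan is to rewrite the statement as a lower bound on the \emph{upper density of matches}. Since $\delta(n,n+r_2) = 1 - \indic{x_n = x_{n+r_2}}$, setting
\[
A_r \;:=\; \limsup_{N\to\infty}\frac1N\,\bigl|\{\,n<N : x_n = x_{n+r}\,\}\bigr|
\]
gives $C_{(0,r_2)} = 1 - A_{r_2}$, and hence $\liminf_{r_2} C_{(0,r_2)} = 1 - \limsup_{r_2} A_{r_2}$. Thus \eqref{CRbound2} is equivalent to $\limsup_{r\to\infty} A_r \ge 1/k$, i.e.\ to producing, for every threshold $R$ and every $\varepsilon>0$, a lag $r>R$ with $A_r \ge 1/k - \varepsilon$.

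The engine is a convexity bound inside a sliding window. In any block of $L$ consecutive letters with symbol counts $c_0,\dots,c_{k-1}$, the number of equal unordered pairs is $\sum_a \binom{c_a}{2} \ge k\binom{L/k}{2} = \tfrac{L^2}{2k} - \tfrac{L}{2}$ by Jensen. Discarding the at most $RL$ pairs at lag $\le R$, each block still contains at least $\tfrac{L^2}{2k} - \tfrac{L}{2} - RL$ equal pairs whose lag lies in $(R,L)$. Summing over the $N$ windows $B_j=\{j,\dots,j+L-1\}$ and organizing the count by lag, a fixed matching pair at lag $d$ lies in at most $L-d$ of these windows, so
\[
N\Bigl(\tfrac{L^2}{2k} - \tfrac{L}{2} - RL\Bigr)\;\le\;\sum_{d=R+1}^{L-1}(L-d)\,M_d,
\]
where $M_d$ counts the lag-$d$ matches in the relevant range. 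Because $\sum_{R<d<L}(L-d)\le L^2/2$, the heaviest term obeys $\max_{R<d<L} M_d \ge N\bigl(\tfrac1k - \tfrac{1+2R}{L}\bigr)$; the point is that the weight sum $L^2/2$ matches the $\tfrac{1}{2k}$ of the convexity bound exactly, so the constant $1/k$ survives with no factor of two lost, while the discarding of short lags forces the surviving heavy lag to exceed $R$.

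From here I would pin down a single lag by a second pigeonhole. Given $R$ and $\varepsilon$, first choose $L$ so large that $(1+2R)/L < \varepsilon/2$, and then restrict to $N \gg L$ so that the $O(L)$ discrepancy between the windowed count and $|\{n<N:\cdots\}|$ is below $\varepsilon N/2$. The displayed inequality then yields, for every sufficiently large $N$, some lag $d(N)$ in the \emph{finite} set $\{R+1,\dots,L-1\}$ with $M_{d(N)}(N)/N \ge 1/k-\varepsilon$. Since this candidate set is finite and independent of $N$, one fixed $d^\star\in(R,L)$ is selected for infinitely many $N$, and along that subsequence $A_{d^\star}\ge 1/k-\varepsilon$. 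As $R$ and $\varepsilon$ were arbitrary, this gives $\limsup_{r\to\infty} A_r \ge 1/k$ and hence the theorem.

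I expect the main obstacle to be the bookkeeping of three interacting limits: the $\limsup$ in $N$ defining $A_r$, the per-$N$ averaging that produces a good lag, and the outer limit in $r$. The difficulty is that the extremal lag a priori depends on $N$; the remedy is to trap it inside the finite window $(R,L)$ so that a pigeonhole over $N$ locks onto a single lag valid for the $\limsup$. A secondary subtlety is making the large-lag restriction and the sharp constant $1/k$ hold \emph{simultaneously}, which is precisely what the $(L-d)$-weighting together with the deletion of lag-$\le R$ pairs achieves.
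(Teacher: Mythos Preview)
Your argument is correct and is essentially the $m=2$ case of the paper's proof of Theorem~\ref{thm1} (from which Theorem~\ref{thm0} is deduced): your Jensen bound $\sum_a\binom{c_a}{2}\ge L^2/(2k)-L/2$ is precisely the complement of Lemma~\ref{lem1} at $d=2$, and your sliding-window double count reorganized by lag with weights $L-d$ is exactly the paper's sum $S$ reorganized by range $b$ with weights $q-b$. The only packaging difference is that you extract a good lag directly by pigeonholing over the finite set $\{R+1,\dots,L-1\}$, whereas the paper assumes all $C_{\vr}$ exceed $1-1/k+\varepsilon/2$ and reaches a contradiction through the same finiteness.
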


In the next section we provide the construction of deterministic sequences with equality in~(\ref{CRbound2}).
More precisely, we show that for generalized Rudin-Shapiro sequences ($k$ prime or squarefree) we have
$$\inf_{r_2>0} \{C_{(0,r_2)}\}=1-\frac{1}{k}.$$
To generalize Theorem~\ref{thm0} to larger values of $m$, we must precisely define the notion of ``$\vr$ getting large''.  Let $||\cdot||$ be a norm on the finite dimensional vector space $\mathbb{R}^m$.  We will prove the following upper bound on $C_{\vr}$ as $||\vr||$ tends to infinity:

\begin{thm} \label{thm1}
 Let $x$ be an infinite word over an alphabet of size $k$.  Then for any $m \geq 2$ and any norm $||\cdot||$, we have
\begin{equation}\label{CRbound}
  \lim_{\lambda \rightarrow \infty} \left( \inf \left\{C_{\vr} : \vr \in \mathbb{N}^m, \; \vr \text{ normalized},\;  ||\vr|| \geq \lambda\right\} \right) \leq 1 - \frac{1}{k^{m-1}}.
\end{equation}
\end{thm}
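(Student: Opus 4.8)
The plan is to rewrite the statement in terms of the upper density of \emph{monochromatic} positions. Since $\delta(n+r_1,\ldots,n+r_m)=1$ exactly when the symbols $x_{n+r_1},\ldots,x_{n+r_m}$ are not all equal, for normalized $\vr$ we have
\begin{equation*}
C_{\vr}=1-f(\vr),\qquad f(\vr):=\limsup_{N\to\infty}\frac1N\,\#\{\,n<N:\ x_{n+r_1}=\cdots=x_{n+r_m}\,\}.
\end{equation*}
Because the inner infimum in \eqref{CRbound} is nondecreasing in $\lambda$, it suffices to show that for every $\lambda$ and every $\epsilon>0$ there is a single normalized $\vr$ with $\|\vr\|\ge\lambda$ and $f(\vr)\ge k^{1-m}-\epsilon$; equivalently $\sup\{f(\vr):\|\vr\|\ge\lambda\}\ge k^{1-m}$. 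First I would record the two elementary inputs: by equivalence of norms $\|\vr\|\ge c\,r_m$, so it is enough to force the last coordinate $r_m$ to be large; and, by discrete convexity of $c\mapsto\binom{c}{m}$, any length-$L$ window contains at least $k\binom{\lfloor L/k\rfloor}{m}$ monochromatic $m$-subsets, the minimum being attained at the balanced symbol distribution.

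The key device is to count monochromatic subsets through \emph{overlapping} windows of a fixed length $L$ and to weight each shape $\vr$ (with $r_m<L$) by the number $L-r_m$ of its placements in a window. Summing the per-window lower bound over all windows $[a,a+L)\subseteq[0,N)$ and double counting by shape gives
\begin{equation*}
\sum_{\vr:\,r_m<L}(L-r_m)\,A_{\vr}(N)\ \ge\ (N-L+1)\,k\binom{\lfloor L/k\rfloor}{m},
\end{equation*}
where $A_{\vr}(N)=\#\{n:\ n+r_m<N,\ x_{n+r_1}=\cdots=x_{n+r_m}\}$: each monochromatic subset lies in at most $L-\mathrm{diam}$ windows, which bounds the incidence count above by the weighted shape sum, while the per-window estimate bounds it below.

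Now I would divide by $N$ and let $N\to\infty$. The sum runs over the \emph{finitely many} shapes with $r_m<L$, so $\limsup$ passes inside it and $A_{\vr}(N)/N$ contributes exactly $f(\vr)$, yielding
\begin{equation*}
\sum_{\vr:\,r_m<L}(L-r_m)\,f(\vr)\ \ge\ k\binom{\lfloor L/k\rfloor}{m}.
\end{equation*}
The weights sum cleanly: grouping shapes by $r_m=d$ gives $\binom{d-1}{m-2}$ shapes, and a hockey-stick summation produces the exact identity $\sum_{\vr:\,r_m<L}(L-r_m)=\binom{L}{m}$. Hence the maximal $f(\vr)$ over these shapes is at least $k\binom{\lfloor L/k\rfloor}{m}\big/\binom{L}{m}\to k^{1-m}$ as $L\to\infty$. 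Discarding the shapes with $r_m<\Lambda$ (for a threshold $\Lambda$ chosen so that $r_m\ge\Lambda$ forces $\|\vr\|\ge\lambda$) costs only $O_\Lambda(L)$ in both numerator and denominator, which is negligible against $\binom{L}{m}$ for $m\ge2$; so for $L$ large the maximizing shape already satisfies $r_m\ge\Lambda$. Taking $L\to\infty$ then furnishes, for each $\epsilon$, a large shape with $f(\vr)\ge k^{1-m}-\epsilon$, as required.

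The step I expect to be the crux is the choice of weighting together with the use of a \emph{fixed} window length. A naive pigeonhole distributing the $\approx L^m/(m!\,k^{m-1})$ monochromatic subsets of an anchored window $[0,N)$ among its shapes loses a factor $m$ and, worse, produces a good shape that depends on $N$, saying nothing about the $\limsup$ defining a single $f(\vr)$. Weighting by $L-r_m$ is exactly what makes ``maximum $\ge$ weighted average'' reproduce the constant $k^{1-m}$ with no loss (through $\sum(L-r_m)=\binom{L}{m}$), while fixing $L$ keeps the shape sum finite so that the $\limsup$ over $N$ can be taken \emph{before} selecting the shape. The remaining points---the discrete-convexity bound, the incidence inequality with its boundary terms, and the floor in $\lfloor L/k\rfloor$---I expect to be routine.
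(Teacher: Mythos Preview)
Your proof is correct and shares the same combinatorial core as the paper's: both count monochromatic $m$-subsets through overlapping windows of a fixed length $L$ (the paper's $q$), and both exploit that each shape $\vr$ appears with multiplicity $L-r_m$ in that count. The paper proceeds by contradiction, sandwiching the double sum $S=\sum_{a}\sum_{U\subseteq\{a,\ldots,a+q-1\}}\delta(U)$ between an upper bound from its Lemma~\ref{lem1} and a lower bound coming from the assumed $C_{\vr}\ge 1-k^{1-m}+\varepsilon$; you instead argue directly, passing to the complementary density $f(\vr)=1-C_{\vr}$ and using ``maximum $\ge$ weighted average'' together with the exact hockey-stick identity $\sum_{r_m<L}(L-r_m)=\binom{L}{m}$. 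Your convexity bound $\sum_i\binom{b_i}{m}\ge k\binom{\lfloor L/k\rfloor}{m}$ is precisely the complementary form of the paper's Lemma~\ref{lem1}. The direct route is cleaner---it avoids the contradiction setup and the careful choices of $p$, $q$, $n$ and attendant $\varepsilon$-bookkeeping that occupy most of Section~\ref{sec3}---while the paper's version makes the role of each parameter more explicit; but the underlying idea is the same.
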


We note that Theorem~\ref{thm0} is immediately implied by Theorem~\ref{thm1} by taking $m = 2$.  Theorem~\ref{thm1} is proven via a combinatorial argument in Section~\ref{sec3}.

In order to also consider the local autocorrelation properties of sequences, we define a related quantity. Again, let $x$ be an infinite word over an alphabet of size $k$.  For a given vector $\vr$ and positive integers $d$, we define
\begin{equation}\label{dr}
  D_{\vr}^d = \min_{n \geq 0} \left(\frac{1}{d}\sum_{i=n}^{n+d-1}\delta(i+r_1,i+r_2,\ldots,i+r_m)\right) \text{.}
\end{equation}
Note that for a random sequence on $k$ symbols, we necessarily have $D_{\vr}^d=0$ for all $\vr$ and $d$. We will prove that for a given vector $\vr$, the value of $C_{\vr}$ of an infinite sequence is an upper bound for all of the values of $D_{\vr}^d$:
\begin{thm} \label{thm2}
  Let $x$ be an infinite word over an alphabet of size $k$, $\vr$ be normalized and
  $d > 0$.  Then $D_{\vr}^d \leq C_{\vr}$.
\end{thm}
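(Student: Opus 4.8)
The plan is to prove the equivalent inequality $C_{\vr} \geq D_{\vr}^d$ by a blocking argument: I would partition the initial segment $\{0,1,\ldots,N-1\}$ into consecutive windows of length $d$, bound the contribution of each window from below using the very definition of $D_{\vr}^d$, and then let $N \to \infty$. The key observation driving everything is that $D_{\vr}^d$ is, by construction, a lower bound for the average of $\delta$ over \emph{every} length-$d$ window, regardless of where it starts.

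First I would record this per-window bound. Since $D_{\vr}^d$ is the minimum over all starting positions $n \geq 0$ of the window average $\frac{1}{d}\sum_{i=n}^{n+d-1}\delta(i+r_1,\ldots,i+r_m)$ (a minimum that is attained because the window averages take values in the finite set $\{0,1/d,\ldots,1\}$), we have for every integer $j \geq 0$ that
\[
  \sum_{i=jd}^{(j+1)d-1}\delta(i+r_1,\ldots,i+r_m) \geq d\cdot D_{\vr}^d .
\]
Next, fix $N$ and write $N = qd + s$ with $0 \leq s < d$. Summing the previous inequality over the $q$ complete blocks $j = 0,1,\ldots,q-1$, and using that each $\delta \in \{0,1\}$ is nonnegative to simply discard the $s$ leftover terms, I obtain
\[
  \sum_{n<N}\delta(n+r_1,\ldots,n+r_m) \;\geq\; qd\cdot D_{\vr}^d \;=\; (N-s)\,D_{\vr}^d .
\]

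Dividing by $N$ gives $\frac{1}{N}\sum_{n<N}\delta(n+r_1,\ldots,n+r_m) \geq \frac{N-s}{N}\,D_{\vr}^d$, and taking the liminf as $N \to \infty$ while noting $s/N \to 0$ yields $C_{\vr} \geq D_{\vr}^d$, which is the claim. I do not expect a genuine obstacle here: the argument is a routine counting estimate, and the only point needing a word of care is the incomplete final block of length $s$, which is harmless precisely because $\delta$ is nonnegative, so dropping those terms can only decrease the sum and hence preserves the lower bound.
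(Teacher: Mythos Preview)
Your proof is correct and follows essentially the same approach as the paper: both arguments partition an initial segment into consecutive length-$d$ blocks, apply the defining lower bound $d\,D_{\vr}^d$ on each complete block, and observe that the incomplete remainder of size $<d$ is negligible after dividing by $N$. The only cosmetic difference is that the paper wraps this in a proof by contradiction, whereas you argue directly.
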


As an immediate consequence of Theorem~\ref{thm1} and Theorem~\ref{thm2}, we obtain an upper bound on $D_{\vr}^d$ as $||\vr||$ tends to infinity.
\begin{cor}\label{coro}
  Let $x$ be an infinite word over an alphabet of size $k$.  Then for any $m \geq 2$, $d > 0$, and norm $||\cdot||$, we have
\begin{equation}\label{DRbound}
  \lim_{\lambda \rightarrow \infty} \left( \inf \left\{D_{\vr}^d : \vr \in \mathbb{N}^m, \; \vr \text{ normalized}, \;||\vr|| \geq \lambda\right\} \right) \leq 1 - \frac{1}{k^{m-1}}.
\end{equation}
\end{cor}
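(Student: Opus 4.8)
The plan is to combine the pointwise bound of Theorem~\ref{thm2} with the asymptotic bound of Theorem~\ref{thm1}; the only subtlety is the interchange of the infimum with the limit in $\lambda$, and once that is handled correctly the result follows at once.

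First I would fix $d > 0$ and, for each $\lambda \geq 0$, introduce the admissible set
\[
  S_\lambda = \left\{\vr \in \mathbb{N}^m : \vr \text{ normalized},\ ||\vr|| \geq \lambda\right\},
\]
together with the two quantities $f(\lambda) = \inf_{\vr \in S_\lambda} D_{\vr}^d$ and $g(\lambda) = \inf_{\vr \in S_\lambda} C_{\vr}$. By Theorem~\ref{thm2} we have $D_{\vr}^d \leq C_{\vr}$ for every normalized $\vr$, and in particular for every $\vr \in S_\lambda$. Since taking an infimum preserves a termwise inequality over a common index set, it follows that $f(\lambda) \leq g(\lambda)$ for every $\lambda$.

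Next I would observe that both $f$ and $g$ are nondecreasing in $\lambda$, because $S_{\lambda'} \subseteq S_\lambda$ whenever $\lambda' \geq \lambda$, so that shrinking the index set can only raise the infimum. Moreover $\delta(\cdot) \in \{0,1\}$ forces $D_{\vr}^d, C_{\vr} \in [0,1]$, so both $f$ and $g$ are bounded; hence the limits $\lim_{\lambda \to \infty} f(\lambda)$ and $\lim_{\lambda \to \infty} g(\lambda)$ exist. Passing to the limit in $f(\lambda) \leq g(\lambda)$ and invoking Theorem~\ref{thm1} for the right-hand side yields
\[
  \lim_{\lambda \to \infty} f(\lambda) \leq \lim_{\lambda \to \infty} g(\lambda) \leq 1 - \frac{1}{k^{m-1}},
\]
which is exactly \eqref{DRbound}.

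There is no genuine obstacle here beyond the bookkeeping above, consistent with the corollary being stated as an immediate consequence. The one point meriting care is that the inequality must be established at the level of the infima over a \emph{fixed} set $S_\lambda$ before the limit in $\lambda$ is taken, rather than attempting to compare the two limits term by term; since $D_{\vr}^d \leq C_{\vr}$ holds uniformly over all normalized $\vr$, this presents no difficulty.
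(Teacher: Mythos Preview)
Your proposal is correct and matches the paper's approach exactly: the paper simply states the corollary ``as an immediate consequence of Theorem~\ref{thm1} and Theorem~\ref{thm2}'' without further argument, and your write-up supplies precisely the routine bookkeeping (pointwise comparison $D_{\vr}^d \leq C_{\vr}$, infimum over the common index set $S_\lambda$, monotonicity and boundedness to guarantee the limits exist) needed to make that immediacy explicit.
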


An interesting example occurs when we choose a fixed $d>0$ and take $$\vr = (0,d,2d,\ldots,(m-1)d).$$ Then for each subword $w_1w_2\cdots w_m$ of $x$ with $\left|w_i\right| = d$ for all $i$, the number of indices $j$ where $|\{w_i\left[j\right] : 1 \leq i \leq m\}| > 1$ is at least $dD_{\vr}^d$. In this case, for sufficiently large $d$, we can get arbitrarily close to the bound in~(\ref{DRbound}).

\begin{thm}\label{thm3}
For all $\varepsilon>0$ there exist an infinite word $x$ over an alphabet of size $k$ and $d_0=d_0(\varepsilon)$ such that for all $d> d_0$ and ${\vr}=(0,d,2d,\ldots, (m-1)d)$ we have
$$D_{\vr}^d \geq  1-\frac{1}{k^{m-1}}-\varepsilon.$$
\end{thm}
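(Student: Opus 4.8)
The plan is to produce the word $x$ by a probabilistic argument based on the (asymmetric) Lov\'asz Local Lemma. First I would color $\mathbb{N}$ randomly, choosing each $x_i \in \{0,1,\ldots,k-1\}$ independently and uniformly. For a window position $n \geq 0$ and a scale $d$, set $Y_i = \indic{x_i = x_{i+d} = \cdots = x_{i+(m-1)d}}$, so that $\delta(i,i+d,\ldots,i+(m-1)d) = 1 - Y_i$ and each $Y_i$ has mean $q := 1/k^{m-1}$. The key observation is that for $i \neq j$ in a common window $[n,n+d-1]$ the index sets $\{i+ad : 0 \le a \le m-1\}$ and $\{j+ad : 0 \le a \le m-1\}$ are disjoint, since $i - j = (b-a)d$ is impossible when $0 < |i-j| < d$; hence within a fixed window the variables $(Y_i)_{n \le i < n+d}$ are mutually independent. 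Let $A_{n,d}$ be the bad event that $\sum_{i=n}^{n+d-1} Y_i > (q+\varepsilon)d$. Avoiding every $A_{n,d}$ for a fixed $d$ forces $\frac1d\sum_{i=n}^{n+d-1}\delta \ge 1-q-\varepsilon$ for all $n$, that is $D_{\vr}^d \ge 1 - 1/k^{m-1} - \varepsilon$.

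Next I would bound the two quantities the local lemma needs. Since $\sum_{i=n}^{n+d-1} Y_i$ is a sum of $d$ independent $[0,1]$-valued variables of mean $qd$, Hoeffding's inequality gives $\Pr(A_{n,d}) \le \exp(-2\varepsilon^2 d)$. For the dependencies, $A_{n,d}$ is determined by the colors on $[n, n+md-1]$, and under the product measure two bad events are independent unless these defining intervals meet; for a fixed scale $d'$ the number of positions $n'$ with $A_{n',d'}$ dependent on $A_{n,d}$ is at most $m(d+d')$.

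The crucial point is that the statement requires a single word $x$ that works simultaneously for all $d > d_0$, so the bad events form the doubly-indexed family $\{A_{n,d} : n \ge 0,\ d > d_0\}$, in which each event has infinitely many neighbours; the symmetric local lemma therefore does not apply and I would use the general version with weights. Taking $x_{n,d} = \exp(-\varepsilon^2 d)$, I must verify $\Pr(A_{n,d}) \le x_{n,d}\prod_{(n',d') \sim (n,d)}(1 - x_{n',d'})$. Using $-\log(1-t) \le 2t$ and the neighbour count above, the logarithm of the product is bounded below by $-2m\big(d\sum_{d'>d_0}e^{-\varepsilon^2 d'} + \sum_{d'>d_0}d'e^{-\varepsilon^2 d'}\big)$, so the required inequality reduces to $\varepsilon^2 \ge 2m\sum_{d'>d_0}e^{-\varepsilon^2 d'} + \frac{2m}{d}\sum_{d'>d_0}d'e^{-\varepsilon^2 d'}$. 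Both tail sums tend to $0$ as $d_0 \to \infty$, so this holds for all $d > d_0$ once $d_0 = d_0(\varepsilon)$ is chosen large enough (here $k$ and $m$ are fixed). I expect this convergence bookkeeping---choosing the weights so that the infinite weighted product stays positive---to be the main technical obstacle.

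Finally I would pass from finitely many events to the whole family by compactness. The space $\{0,\ldots,k-1\}^{\mathbb{N}}$ is compact in the product topology, and since each $A_{n,d}$ depends on finitely many coordinates the set of colorings avoiding it is clopen. Applied to any finite subfamily (whose neighbour products are only larger, so the verified condition still holds), the general local lemma shows that every finite intersection of these sets has positive probability and is thus nonempty. By the finite intersection property, the intersection over the entire family is nonempty, and any word in it satisfies $D_{\vr}^d \ge 1 - 1/k^{m-1} - \varepsilon$ for every $d > d_0$, as required.
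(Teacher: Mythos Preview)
Your proposal is correct and follows essentially the same route as the paper's proof: the asymmetric Lov\'asz Local Lemma with weights of the form $e^{-c\varepsilon^2 d}$, Hoeffding's inequality to bound $\Pr(A_{n,d})\le e^{-2\varepsilon^2 d}$, the same dependency count via overlapping intervals of length $md$, and a compactness argument to pass from finite subfamilies to the infinite word. Your explicit observation that the indicators $Y_i$ within a single window are mutually independent (because their supporting index sets $\{i+ad\}$ are pairwise disjoint) is precisely what underlies the paper's direct string count for $\Pr[A_{i,d}]$, and your finite-intersection-property version of the compactness step is equivalent to the paper's ``build finite strings of each length, then take a limit'' argument.
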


\section{Generalized Rudin-Shapiro sequences}\label{sec2}

The quantity $C_{\vr}$ has been studied for various special sequences. A classical result of Mahler~\cite{Ma27} states that for the Thue-Morse sequences over $k$ symbols, the summatory correlation has no uniform leading term. On the contrary, Queff\'elec~\cite{Qu87} noted (referring to an unpublished result by Kamae) that the Rudin-Shapiro sequence indeed has the desired leading term, whenever $r$ is fixed. As for the hub of the present article, Mauduit and S\'ark\"ozy~\cite[Corollary after Theorem~4]{MS98} showed that for the correlation of order $2$ one may let $r_2=o(N)$ without losing this property. The following definition gives an extension to alphabets of size $k\geq 2$.

\begin{defn}\label{def}
Let $g: \; \{0,1,\ldots,k-1\}\times \mathbb{Z}\rightarrow \mathbb{Z}$, $(j,n)\mapsto g(j,n)$ be a function which is periodic in $n$ with period $k$. Furthermore, let $g$ be such that for all integers $u, i$ with $0\leq u< u+i\leq k-1$ we have
$$\{\;(g(u+i,n)-g(u,n)) \bmod k:\; 0\leq n\leq k-1\;\}=\{\;0,1,\ldots, k-1\;\}.$$
Then we call a sequence $(\hat{a}(n))_{n\geq 0}$ over the alphabet $\{0,1,\ldots,k-1\}$ a \textit{generalized Rudin-Shapiro sequence} if there exists a sequence of integers $(a(n))_{n\geq 0}$ such that $\hat{a}(n)\equiv a(n)$ (mod $k$) and
\begin{equation}\label{reccc}
  a(n k+j)=a(n)+g(j,n),\qquad 0\leq j\leq k-1, \quad n\geq 1.
\end{equation}
The function $g$ is called an \textit{admissible function}.
\end{defn}

\medskip

\textbf{Example 1:} A ``canonical'' admissible function $g$ in the sense of Definition~\ref{def} is
\begin{equation}\label{stand}
  g(j,n)=j\cdot (n \bmod k),
\end{equation}
which is Queff\'elec's generalization for the ordinary Rudin-Shapiro sequence~\cite[Section 4]{Qu87}. In this case $g(u+i,n)-g(u,n)\equiv in$ (mod $k$), and $\{in:\; 0\leq n\leq k-1\}$ runs for $i$ with $0\leq i\leq k-1$ through all residue classes mod $k$, provided $k$ is prime. In particular, for $k=2$ and
$$g(j,n)=\left\{
  \begin{array}{ll}
    1, & \hbox{if $j=1$, $n\equiv 1$ (mod $2$);} \\
    0, & \hbox{otherwise}
  \end{array}
\right.
$$
we get the \textit{Rudin-Shapiro sequence over the alphabet $\{0,1\}$}, namely, $$(\hat{a}(n))_{n\geq 0}=0,0,0,1,0,0,1,0,\ldots,$$ where the corresponding sequence $a(n)$ counts the number of subblocks $(1,1)$ in the binary expansion of $n$.

\medskip

\textbf{Example 2:} For $k=2$ and appropriate initial conditions, we get sequences which count \textit{any} fixed block of size two. For instance, by setting
$$g(1,0)=1, \qquad g(0,0)=g(1,1)=g(0,1)=0,$$
the resulting sequence $(\hat{a}(n))_{n\geq 0}$ counts (mod $2$) the number of subblocks $(01)$ in the binary expansion of $n$.

\medskip

\textbf{Example 3:}
For $k=3$ an admissible function other than~(\ref{stand}) is given by
$$g(j,n)=\left\{
  \begin{array}{ll}
    1, & \hbox{if $j\equiv n$ (mod $3$);} \\
    0, & \hbox{otherwise.}
  \end{array}
\right.$$
Here, the resulting sequence $(\hat{a}(n))_{n\geq 0}$ (with initial conditions $\hat{a}(0)=\hat{a}(1)=\hat{a}(2)=0$) gives the cumulative number of appearances (mod $3$) of subblocks $(00)$, $(11)$ and $(22)$ in the ternary expansion of integers.

\medskip

The following theorem shows that generalized Rudin-Shapiro sequences resemble the discrete autocorrelation behavior of random sequences if $m=2$.

\begin{thm}\label{mtheo}
Let
$$\hat{a}(0), \hat{a}(1), \hat{a}(2),\ldots$$ be a generalized Rudin-Shapiro sequence over $\{0,1,\ldots,k-1\}$ with $k$ prime. Moreover, let $0\leq r_1<r_2$. Then, as $N\rightarrow \infty$, we have
\begin{equation}
  \sum_{n<N} \delta(n+r_1,n+r_2) = \left(1-\frac{1}{k}\right)N+O_{k}\left((r_2-r_1)\log \frac{N}{r_2-r_1}+r_2\right),\label{propstat}
\end{equation}
where the implied constant only depends on $k$.
\end{thm}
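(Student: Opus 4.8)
The plan is to reduce the counting sum to exponential sums via a roots-of-unity filter, and then to bound each exponential sum using the self-similar recursion for $a(n)$ built into Definition~\ref{def}. Write $\omega=\ex^{2\pi i/k}$, a primitive $k$th root of unity (here $k$ prime guarantees $\omega^t$ has order $k$ for every $t\not\equiv 0 \pmod k$, which will matter when the admissibility condition is applied). Since $\hat a(n)\equiv a(n)\pmod k$, orthogonality of characters gives, for each $n$, $\delta(n+r_1,n+r_2)=1-\frac1k\sum_{t=0}^{k-1}\omega^{t(a(n+r_1)-a(n+r_2))}$, the inner sum being $1$ exactly when $a(n+r_1)\equiv a(n+r_2)\pmod k$. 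Summing over $n<N$ and isolating the $t=0$ contribution yields $\sum_{n<N}\delta(n+r_1,n+r_2)=\left(1-\frac1k\right)N-\frac1k\sum_{t=1}^{k-1}S_t$, where $S_t=\sum_{n<N}\omega^{t(a(n+r_1)-a(n+r_2))}$. Thus the main term $(1-1/k)N$ appears automatically, and it remains to prove $|S_t|=O_k\!\left((r_2-r_1)\log\frac{N}{r_2-r_1}+r_2\right)$ for each fixed $t\in\{1,\dots,k-1\}$.

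Setting $h=r_2-r_1$ and substituting $m=n+r_1$, each $S_t$ becomes a sum of $\omega^{t(a(m+h)-a(m))}$ over an interval of length $N$ starting at $r_1$. Since the recursion \eqref{reccc} is only valid for $n\ge 1$, I would first strip off the $O(r_2)$ initial indices and the incomplete base-$k$ blocks at the ends, absorbing these into the error term $O_k(r_2)$. The heart of the matter is then to bound $\Sigma(h,L)=\sum_{0\le n<L}\omega^{t(a(n+h)-a(n))}$ for a generic interval of length $L\le N$.

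To estimate $\Sigma(h,L)$ I would exploit the digit recursion. Writing $n=mk+j$ with $0\le j\le k-1$, and setting $\alpha=\lfloor h/k\rfloor$, $\beta=h\bmod k$, the identity $a(mk+j)=a(m)+g(j,m)$ splits each block $\{mk,\dots,mk+k-1\}$ into a ``no-carry'' part ($j+\beta<k$, where $a(n+h)=a(m+\alpha)+g(j+\beta,m+\alpha)$) and a ``carry'' part ($j+\beta\ge k$, where the high part becomes $m+\alpha+1$). Because $g(j,n)$ depends on $n$ only through $n\bmod k$, the resulting $g$-factors are periodic in $m$ of period $k$; grouping the $m$-sum into residue classes modulo $k$ then expresses $\Sigma(h,L)$ in terms of shorter sums $\Sigma(\alpha,\approx L/k)$ and $\Sigma(\alpha+1,\approx L/k)$ weighted by character sums of the $g$-values. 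Iterating replaces the shift $h$ by $\lfloor h/k\rfloor$ and the length $L$ by $\approx L/k$, so after $O(\log(N/h))$ steps the shift drops to $O(1)$ and the remaining sum is bounded trivially.

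The crucial cancellation is supplied by the admissibility condition: for $1\le t\le k-1$ and any valid $u,i$, the hypothesis forces $\sum_{n=0}^{k-1}\omega^{t(g(u+i,n)-g(u,n))}=\sum_{l=0}^{k-1}\omega^{tl}=0$, so the weighting character sums that arise when $m$ runs over a complete residue system vanish; this is precisely what prevents a spurious main term of size $L$ from surviving for $t\ne 0$ and makes the recursion genuinely contracting. The main obstacle I anticipate is twofold. First, handling the carry terms ($\beta\ne 0$), where the two shifts $\alpha$ and $\alpha+1$ must be tracked simultaneously and the residue-class bookkeeping no longer aligns perfectly with a single complete residue system, so a controlled remainder of size $O(h)$ is shed at each level. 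Second, summing these per-level remainders across the $O(\log(N/h))$ iterations to produce exactly $O_k(h\log(N/h))$, while verifying that the incomplete boundary blocks and the low-index corrections stay within $O_k(r_2)$. Assembling $|S_t|=O_k\!\left(h\log(N/h)+r_2\right)$ for each $t$ and substituting into the identity above then gives \eqref{propstat}.
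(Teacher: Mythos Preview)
Your plan coincides with the paper's proof: orthogonality of characters (Proposition~\ref{use}) isolates the main term and reduces everything to bounding the exponential sums $\gamma_N(r)=\sum_{n<N}\omega^{t(a(n+r)-a(n))}$, and the digit recursion~\eqref{reccc} with a carry/no-carry split in $j$ drives an induction on $r$, exactly as in Section~\ref{sec5}.

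One substantive correction to your accounting is needed. The admissibility cancellation $\sum_{n=0}^{k-1}\omega^{t(g(u+i,n)-g(u,n))}=0$ requires the \emph{same} second argument $n$ in both $g$-evaluations; at a generic step with high-part shift $\alpha=\lfloor h/k\rfloor\ge 1$ the twist that appears is $g(j+\beta,m+\alpha)-g(j,m)$, which does \emph{not} sum to zero over $m\bmod k$, so the recursion is not ``genuinely contracting'' but simply produces $k$ twisted sums of length $\approx L/k$ with total weight $k$. The paper therefore carries a $k$-periodic twist $f$ along explicitly, writing $\gamma_N(r,f)=\sum_{n<N}\omega^{t(a(n+r)-a(n))}\omega^{tf(n)/1}$ (cf.~\eqref{gammhat}), iterates $\lfloor\log_k h\rfloor$ times (not $\log(N/h)$) until the shift lands in $\{1,\dots,k\}$, and only then invokes admissibility (via~\eqref{crux}) to get the base-case estimate $|\gamma_{N'}(r',f)|=O_k(\log N')$ with $N'\asymp N/h$. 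The claimed error $O_k\bigl(h\log(N/h)\bigr)$ thus arises as $k^{\lfloor\log_k h\rfloor}\cdot O_k\bigl(\log(N/h)\bigr)$, not from $\log(N/h)$ contracting levels each shedding $O(h)$; your ``remaining sum is bounded trivially'' would give only $O(N)$ without this base-case analysis.
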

In the proof, we give an explicit value for the implied constant. As an immediate consequence we note

\begin{cor}
  In the setting of Theorem~\ref{mtheo}, if $r_2=o(N)$ then
  $$\sum_{n<N} \delta(n+r_1,n+r_2)\sim \left(1-\frac{1}{k}\right)N.$$
\end{cor}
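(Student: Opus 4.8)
The plan is to deduce the corollary directly from the asymptotic formula \eqref{propstat} of Theorem~\ref{mtheo}, whose implied constant depends only on $k$ and not on $r_1, r_2, N$ individually. Since the main term there is already $(1-\tfrac{1}{k})N$, it suffices to verify that under the hypothesis $r_2 = o(N)$ the error term $O_k\big((r_2-r_1)\log\frac{N}{r_2-r_1} + r_2\big)$ is itself $o(N)$. Dividing \eqref{propstat} by $N$ and letting $N \to \infty$ would then give $\frac{1}{N}\sum_{n<N}\delta(n+r_1,n+r_2) \to 1 - \tfrac{1}{k}$, which is exactly the claimed equivalence.

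First I would set $d = r_2 - r_1$ and record the trivial bounds $1 \leq d \leq r_2$, the lower bound holding because $r_1 < r_2$ are integers. The summand $r_2$ of the error is $o(N)$ by hypothesis, so it remains only to control $d\log(N/d)$. Writing $t = d/N$, I would rewrite this as $d\log(N/d) = N\cdot(-t\log t)$. Since $d \leq r_2 = o(N)$, the ratio $t = d/N$ tends to $0$ as $N \to \infty$; in particular $t \leq 1$ for $N$ large, so the logarithm is nonnegative and the expression is well defined.

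The one genuine (if mild) point is then the elementary limit $-t\log t \to 0$ as $t \to 0^+$, valid because $t \mapsto -t\log t$ extends continuously to $[0,1]$ with value $0$ at the origin. Consequently $d\log(N/d) = N\cdot o(1) = o(N)$, and together with the $r_2$ contribution this shows the whole error term in \eqref{propstat} is $o(N)$; dividing by $N$ and passing to the limit finishes the argument. The only obstacle, such as it is, lies in the logarithmic term rather than the linear term $r_2$: one must rule out the case where $d$ is a moderate positive power of $N$, for which the factor $\log(N/d)$ might naively seem to inflate the error. The bound $-t\log t \to 0$ dispatches this uniformly, since it forces $d\log(N/d)=o(N)$ for every family with $d=o(N)$, covering both fixed $r_1,r_2$ and the case where they grow with $N$.
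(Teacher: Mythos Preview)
Your argument is correct and is exactly the intended one: the paper states the corollary as ``an immediate consequence'' of Theorem~\ref{mtheo} without further proof, and your verification that the error term $(r_2-r_1)\log\frac{N}{r_2-r_1}+r_2$ is $o(N)$ via the substitution $t=d/N$ and the elementary limit $-t\log t\to 0$ is precisely what makes it immediate.
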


It seems natural to consider the cross product of two generalized Rudin-Shapiro sequences to prime bases to construct an extremal sequence for squarefree $k$. Let $k=p_1 p_2\cdots p_d$ be a product of pairwise distinct primes, and put $c_1=1$, $c_i=p_1p_2\cdots p_{i-1}$ for $2\leq i\leq d$. We define the sequence $(\hat{a}(n))_{n\geq 0}$ by
\begin{equation}\label{sqrfa1}
  \hat{a}(n)= a(n) \bmod k,
\end{equation}
where $(a(n))_{n\geq 0}$ is defined by
\begin{equation}\label{sqrfa2}
  a(n)=c_1 a_1(n)+c_2a_2(n)+\cdots +c_d a_d(n).
\end{equation}
Herein, $(a_i(n))_{n\geq 0}$ satisfies the recursive relation
\begin{equation}\label{sqrfa3}
  a_i(p_in+j)=a_i(n)+g_i(j,n),\qquad 1\leq i\leq d,
\end{equation}
for $n\geq 1$ and $0\leq j\leq p_i-1$. Again, the functions $g_i$ are admissible functions in the sense of Definition~\ref{def} for $1\leq i\leq d$.
Our next result gives an estimate for the correlation of order two.

\begin{thm}\label{mtheo2}
Let $k=p_1 p_2\cdots p_d$ with $d\geq 2$ be squarefree and denote by
$$\hat{a}(0), \hat{a}(1), \hat{a}(2),\ldots$$ a generalized Rudin-Shapiro sequence over $\{0,1,\ldots,k-1\}$
defined by~(\ref{sqrfa1}),~(\ref{sqrfa2}) and~(\ref{sqrfa3}). Moreover, let $0\leq r_1<r_2$ and $0<\gamma<1$.
Then, as $N\rightarrow \infty$, we have
\begin{align}
  \sum_{n<N} \delta(n+r_1,n+r_2)&= \left(1-\frac{1}{k}\right)N+O_{k}\Big((r_2-r_1) N^{1-\gamma/d}+(r_2-r_1) N^{1-\gamma} \log \frac{N^{\gamma/d}}{r_2-r_1}\nonumber\\
   &\quad\qquad\qquad\qquad\qquad\qquad +N^{\gamma}+r_1\Big),\label{propstat2}
\end{align}
where the implied constant only depends on $k$.
\end{thm}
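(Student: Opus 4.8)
The plan is to pass from counting mismatches to additive characters. Writing $e_q(x)=\exp(2\pi\mathrm{i}x/q)$ and using the orthogonality relation $\indic{u\equiv v\pmod k}=\frac1k\sum_{t=0}^{k-1}e_k(t(u-v))$, I would first record that
\[
\sum_{n<N}\delta(n+r_1,n+r_2)=\Big(1-\tfrac1k\Big)N-\frac1k\sum_{t=1}^{k-1}S_t,\qquad S_t=\sum_{n<N}e_k\big(t(a(n+r_1)-a(n+r_2))\big),
\]
since the term $t=0$ contributes exactly $N/k$. The main term $(1-1/k)N$ is then free, and the whole theorem reduces to showing that $\sum_{t=1}^{k-1}|S_t|$ is of the stated order.

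Next I would exploit the product construction. From $a(n)=\sum_{i=1}^d c_i a_i(n)$ with $c_i=k/k_i$ and $k_i=p_ip_{i+1}\cdots p_d$, the character factors as $e_k(t\,a(n))=\prod_{i=1}^d e_{k_i}(t\,a_i(n))$, so the dependence on each base-$p_i$ sequence is isolated into a single factor, and unrolling \eqref{sqrfa3} shows that $a_i(n)$ is a digital sum over adjacent base-$p_i$ digits of $n$ governed by $g_i$. The natural hope is to reuse the single-prime machinery behind Theorem~\ref{mtheo}: the admissibility of $g_i$ forces the per-block difference $g_i(\ell+r_1,m)-g_i(\ell+r_2,m)$ to run through a complete residue system modulo $p_i$, which is exactly what annihilates a mod-$p_i$ character. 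The catch is that the $i$-th factor carries the modulus $k_i$, not $p_i$, and a character of modulus $k_i$ is \emph{not} killed by a difference that is only equidistributed modulo $p_i$; this modulus mismatch is what prevents treating the $d$ primes one at a time.

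To get around this I would reduce the lowest digits of all primes simultaneously, splitting $n$ according to $n\bmod k$ (equivalently, by the Chinese Remainder Theorem, according to the tuple of residues $n\bmod p_i$) so that the full character $e_k$ — rather than any single $e_{p_i}$ — acts on a complete residue system. Concretely, I would partition $[0,N)$ into $\approx N^{1-\gamma}$ blocks of length a power of $k$ near $N^{\gamma}$; on each block the high-order base-$p_i$ digits of $n$ are frozen in every base at once, so the frozen parts cancel in $a_i(n+r_1)-a_i(n+r_2)$ and $S_t$ restricted to a block is controlled by the low digits, on which the joint admissibility of the $g_i$ yields cancellation. Recursing inside a block lowers the effective gap $r_2-r_1$ by one base-$p_i$ factor per level; because the $d$ radices advance at different rates, after the $\approx\gamma\log_k N$ available levels the gap has been reduced by a factor of only about $N^{\gamma/d}$ along the slowest base, which is the origin of the exponent $\gamma/d$. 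Summing the $O(r_2-r_1)$ boundary (carry) contributions per level against the $N^{1-\gamma}$ blocks should produce the terms $(r_2-r_1)N^{1-\gamma/d}$ and $(r_2-r_1)N^{1-\gamma}\log\frac{N^{\gamma/d}}{r_2-r_1}$; the trivial estimate on the innermost, uncollapsed sums contributes $N^{\gamma}$, and the misalignment of the two shifts at the bottom level contributes $r_1$, with $\gamma\in(0,1)$ left as a free optimization parameter.

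The main obstacle I anticipate is precisely this coupling across incompatible radices. Although the characters factor cleanly, the sequences $a_1,\dots,a_d$ are all driven by the same $n$ through coprime bases whose digit expansions do not align, and the carries generated when one reduces modulo $p_i$ perturb the digit patterns seen by the other primes; thus the cancellation extracted from any one base must survive being multiplied by weights that depend on the remaining bases. Making the block truncation clean enough that the frozen high parts genuinely cancel in every base simultaneously, and proving that the coprime-base weights are transverse to the base-$p_i$ variation so that they do not destroy the admissibility-driven cancellation — all while keeping the accumulated carry error at the stated order and correctly accounting for the differing reduction rates (hence the $N^{\gamma/d}$ versus $N^{1-\gamma}$ split) — is where I expect the real work to lie.
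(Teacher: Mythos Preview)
Your reduction to exponential sums via orthogonality and the factorization $e_k(t\,a(n))=\prod_i e_{k_i}(t\,a_i(n))$ are both correct and match the paper. The gap is the paragraph where you conclude that the modulus mismatch ``prevents treating the $d$ primes one at a time.'' This is precisely the step the paper \emph{does} carry out, and your dismissal of it is where the argument goes astray. The observation you are missing is elementary but decisive: for each fixed $h$ with $1\le h<k$, let $l$ be the smallest index with $p_{l+1}\cdots p_d\mid h$ (this set is nonempty since it contains $l=d$). Then $p_l\cdots p_d\nmid h$, so
\[
\frac{h c_l}{k}=\frac{h}{p_l p_{l+1}\cdots p_d}=\frac{h'}{p_l},\qquad h'=\frac{h}{p_{l+1}\cdots p_d}\in\mathbb{Z},\quad \gcd(h',p_l)=1.
\]
Thus the $l$-th factor of the product is genuinely a nontrivial character modulo $p_l$, not modulo $k_l$, and the single-prime Theorem~\ref{mtheo} applies to it directly. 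The remaining factors are estimated trivially.

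With this in hand, the paper's route is much simpler than the joint recursion you sketch. One performs a single CRT decomposition $n\equiv b_i\pmod{p_i^{s_i}}$ with $p_i^{s_i}\asymp N^{\gamma/d}$ (following Kim), separates the contribution of the low digits $b_i$ from the high part $n_i$, and controls the carry corrections $\mu_i(b_i,r,n_i)$ by counting how often a carry propagates to the top digit; this produces the $(r_2-r_1)N^{1-\gamma/d}$ and $(r_2-r_1)N^{1-\gamma}\log(N^{\gamma/d}/(r_2-r_1))$ terms. The main term then factors as $N\prod_i p_i^{-s_i}\sum_{b_i<p_i^{s_i}}e(\tfrac{hc_i}{k}(a_i(b_i+r)-a_i(b_i)))$, and bounding the $l$-th factor by Theorem~\ref{mtheo} and the others by $p_i^{s_i}$ gives the saving. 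There is no recursion, no blocks of length a power of $k$, and no need to synchronize cancellation across all primes simultaneously. Your proposed joint approach might be made to work, but it is both vaguer and harder than necessary; the ``modulus mismatch'' that worried you dissolves once you notice that, for each $h$, one index already has the right modulus.
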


\begin{cor}
  In the setting of Theorem~\ref{mtheo2}, if $r_2=o(N^{\gamma/d})$ then
  $$\sum_{n<N} \delta(n+r_1,n+r_2)\sim \left(1-\frac{1}{k}\right)N.$$
\end{cor}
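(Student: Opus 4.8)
The stated corollary is immediate from Theorem~\ref{mtheo2}: under the hypothesis $r_2=o(N^{\gamma/d})$ each of the four error terms in~\eqref{propstat2} is $o(N)$ (for instance $(r_2-r_1)N^{1-\gamma/d}\le r_2 N^{1-\gamma/d}=o(N)$, while $(r_2-r_1)N^{1-\gamma}\log\frac{N^{\gamma/d}}{r_2-r_1}\ll N^{1-\gamma(d-1)/d}\log N=o(N)$, and $N^{\gamma}=o(N)$ because $\gamma<1$), so the sum equals $\left(1-\tfrac1k\right)N+o(N)$. The substance is therefore Theorem~\ref{mtheo2} itself, which I would prove by Fourier analysis on $\mathbb{Z}/k\mathbb{Z}$ exactly as in the prime case of Theorem~\ref{mtheo}. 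Writing $e_q(t):=\ex^{2\pi\mathrm{i}t/q}$ and using orthogonality of additive characters,
$$\indic{a(n+r_1)\equiv a(n+r_2)\pmod{k}}=\frac1k\sum_{h=0}^{k-1}e_k\big(h(a(n+r_2)-a(n+r_1))\big),$$
so that, summing $\delta(n+r_1,n+r_2)=1-\tfrac1k\sum_{h}e_k(\cdots)$ over $n<N$ and isolating $h=0$,
$$\sum_{n<N}\delta(n+r_1,n+r_2)=\Big(1-\tfrac1k\Big)N-\frac1k\sum_{h=1}^{k-1}S_h(N),\qquad S_h(N):=\sum_{n<N}e_k\big(h(a(n+r_2)-a(n+r_1))\big),$$
and it suffices to bound $|S_h(N)|$ by the right-hand side of~\eqref{propstat2} for each fixed $h\in\{1,\dots,k-1\}$.

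To reduce $S_h$ to the individual base-$p_i$ sequences I would use the Chinese Remainder Theorem twice. First, the character $t\mapsto e_k(ht)$ factors as $\prod_{i=1}^{d}e_{p_i}(h_i\,(t\bmod p_i))$ for suitable $h_i\in\mathbb{Z}/p_i\mathbb{Z}$, and $h\not\equiv0\pmod{k}$ forces $h_i\neq0$ for at least one $i$. Second, from~\eqref{sqrfa2} and $c_\ell=p_1\cdots p_{\ell-1}$ one checks $c_\ell\equiv0\pmod{p_i}$ precisely when $\ell>i$, whence $a(n)\equiv\sum_{\ell\le i}c_\ell a_\ell(n)\pmod{p_i}$. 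The crucial structural observation is then that, taking $i^\ast$ to be the largest index with $h_{i^\ast}\neq0$, the sequence $a_{i^\ast}$ enters the product expansion of $e_k(h\,a(n))$ \emph{only} through the single factor indexed by $i^\ast$, in the shape $e_{p_{i^\ast}}\big(h_{i^\ast}c_{i^\ast}a_{i^\ast}(n)\big)$ with $h_{i^\ast}c_{i^\ast}\not\equiv0\pmod{p_{i^\ast}}$. This is the mechanism that lets cancellation be extracted from one prime at a time, despite the triangular coupling of~\eqref{sqrfa2}.

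I would then estimate $S_h(N)$ by isolating this base-$p_{i^\ast}$ factor. Normalizing by Proposition~\ref{rprop} and writing $\Delta:=r_2-r_1$, the difference $a_{i^\ast}(n+r_2)-a_{i^\ast}(n+r_1)$ is, for all but a controlled set of $n$, a function of a window of base-$p_{i^\ast}$ digits of $n$ of length $\asymp\log_{p_{i^\ast}}\Delta$ together with carry propagation; the admissibility of $g_{i^\ast}$ from Definition~\ref{def} then forces the factor $e_{p_{i^\ast}}(h_{i^\ast}c_{i^\ast}(a_{i^\ast}(n+r_2)-a_{i^\ast}(n+r_1)))$ to cancel over a complete cycle of the governing high-order digit, exactly as in the proof of Theorem~\ref{mtheo}. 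The remaining factors depend only on $a_\ell$ with $\ell<i^\ast$, i.e.\ on base-$p_\ell$ digit data that is entangled with the base-$p_{i^\ast}$ digits being summed; here the pairwise coprimality of the $p_i$ is decisive, since over a block of consecutive $n$ of length $\asymp p_{i^\ast}^{m}\asymp N^{\gamma/d}$ the relevant base-$p_{i^\ast}$ digit runs through a full residue system independently (by CRT) of that data, so the block sum factors into a cancelling $i^\ast$-sum times a boundedly many remaining characters. Summing the blocks, the coarsest single-base blocking contributes a boundary defect $O_k(\Delta N^{1-\gamma/d})$; exploiting cancellation from all $d$ bases sharpens the principal defect to $O_k(\Delta N^{1-\gamma}\log\frac{N^{\gamma/d}}{\Delta})$; and the short remainder together with the cost of normalization account for $N^{\gamma}$ and $r_1$. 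The parameter $\gamma$ is the free cutoff balancing block length against block count.

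The main obstacle is the content of the previous paragraph: turning the heuristic ``the $d$ coprime bases behave independently'' into a rigorous decoupling in the simultaneous presence of the triangular coupling of~\eqref{sqrfa2} and the carries incurred by the shifts $r_1,r_2$. Concretely, one must show that fixing the low-order digits responsible for carries and for the factors $\ell<i^\ast$ still leaves enough independent variation of the top base-$p_{i^\ast}$ digit to invoke admissibility, and one must bookkeep the exceptional $n$ (long carry chains and block boundaries) so their total contribution is absorbed into $N^{\gamma}+r_1$ rather than spoiling the main term $\left(1-\tfrac1k\right)N$. Once this decoupling and the carry count are in hand, the choice of the scale $N^{\gamma/d}$ in each of the $d$ primes and the summation over scales are routine and yield~\eqref{propstat2}, from which the corollary follows as noted above.
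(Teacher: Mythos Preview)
Your first paragraph is correct and is all the corollary requires; the paper likewise states it without proof as immediate from Theorem~\ref{mtheo2}. Your subsequent sketch of Theorem~\ref{mtheo2} itself broadly matches Section~\ref{sec6} of the paper (reduction via Proposition~\ref{use} to the sums $S_h(N)$, CRT blocking at scale $p_i^{s_i}\asymp N^{\gamma/d}$, carry bookkeeping, and reduction to Theorem~\ref{mtheo} at a single prime), with one small discrepancy: the paper does not ``exploit cancellation from all $d$ bases'' as you suggest---it applies Theorem~\ref{mtheo} only at the single index $l$ for which $hc_l/k=h'/p_l$ with $\gcd(h',p_l)=1$ (this $l$ is the largest index with $p_l\nmid h$, i.e.\ exactly your $i^\ast$) and bounds the remaining $d-1$ factors in the product trivially by~$1$.
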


\section{Proof of Theorem~\ref{thm1}}\label{sec3}
We need the following lemma for our proof of Theorem~\ref{thm1}.
\begin{lem} \label{lem1}
Suppose we have a multiset of $n$ distinct objects of $k$ types, and let $d \leq n$ be a fixed constant.  Then among the $\binom{n}{d}$ subsets of $d$ objects, the number containing at least one pair of objects of different types is at most
\[
\dfrac{n^d}{d!}\left(1-\dfrac{1}{k^{d-1}}\right).
\]
\end{lem}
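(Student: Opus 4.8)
The plan is to count the complementary (monochromatic) subsets and then relax from sampling without replacement to sampling with replacement, where a convexity estimate becomes available. Write $n_1, n_2, \ldots, n_k$ for the number of objects of each of the $k$ types, so $n_1 + \cdots + n_k = n$. A $d$-subset contains no pair of objects of different types exactly when it is \emph{monochromatic}, and the number of monochromatic $d$-subsets is $\sum_{i=1}^k \binom{n_i}{d}$; hence the quantity to be bounded equals $\binom{n}{d} - \sum_{i=1}^k \binom{n_i}{d}$. Attacking this difference directly via $\binom{n}{d} = n(n-1)\cdots(n-d+1)/d!$ is awkward, since falling factorials interact poorly with the convexity one wants to use. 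The cleaner route is to compare with ordered tuples chosen with replacement.

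Concretely, I would argue as follows. Ordering the elements of a non-monochromatic $d$-subset produces exactly $d!$ ordered $d$-tuples of distinct objects, each still non-monochromatic, and distinct subsets produce disjoint families of such tuples. Every one of these is in particular an ordered $d$-tuple drawn \emph{with replacement} from the $n$ objects, so
\[
d!\cdot N_{\mathrm{sub}} \;\le\; N_{\mathrm{tup}},
\]
where $N_{\mathrm{sub}}$ is the number of non-monochromatic $d$-subsets and $N_{\mathrm{tup}}$ the number of non-monochromatic ordered $d$-tuples taken with replacement. The latter is immediate to count: there are $n^d$ ordered $d$-tuples with replacement in total, of which $\sum_{i=1}^k n_i^d$ are monochromatic, so $N_{\mathrm{tup}} = n^d - \sum_{i=1}^k n_i^d$.

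It remains to bound $\sum_{i=1}^k n_i^d$ from below, and this is where the essential estimate enters. Since $t \mapsto t^d$ is convex for $d \ge 1$, Jensen's inequality applied to $n_1, \ldots, n_k$ under the constraint $\sum_i n_i = n$ gives $\frac{1}{k}\sum_{i=1}^k n_i^d \ge \big(\frac{1}{k}\sum_{i=1}^k n_i\big)^d = (n/k)^d$, hence $\sum_{i=1}^k n_i^d \ge n^d/k^{d-1}$. Substituting this into $N_{\mathrm{tup}} = n^d - \sum_i n_i^d$ and then into $d!\cdot N_{\mathrm{sub}} \le N_{\mathrm{tup}}$ yields $N_{\mathrm{sub}} \le \frac{n^d}{d!}(1 - 1/k^{d-1})$, as claimed. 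The conceptual point to get right --- the main obstacle --- is precisely this passage from subsets to with-replacement tuples: it is exactly the slack in $\binom{n}{d} \le n^d/d!$ that gets absorbed by enlarging the sample space, and only in the with-replacement model does the monochromatic count become the clean convex sum $\sum_i n_i^d$ to which Jensen applies. The degenerate cases $k=1$ and $d=1$ (where the bound is $0$), as well as any $n_i < d$, are handled automatically by the same inequalities.
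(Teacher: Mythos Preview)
Your proof is correct and follows a genuinely different route from the paper's. The paper writes the count as $\frac{1}{d!}\bigl(\phi(n)-\sum_i\phi(b_i)\bigr)$ with $\phi(x)=x(x-1)\cdots(x-d+1)$, expands $\phi$ into monomials $e_1x+\cdots+e_dx^d$, and applies the power-mean inequality $\sum_i b_i^{\nu}\ge n^{\nu}/k^{\nu-1}$ to each power separately before collapsing the result to $\frac{1}{d!}\bigl(n^d-k(n/k)^d\bigr)$. You bypass the falling-factorial algebra entirely: ordering a non-monochromatic $d$-subset and regarding it as a with-replacement tuple gives the injection $d!\,N_{\mathrm{sub}}\le n^d-\sum_i n_i^d$, after which a single application of Jensen to $t\mapsto t^d$ finishes. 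Your argument is shorter and also sidesteps a delicate point in the paper's term-by-term step: since the coefficients $e_j$ of $\phi$ alternate in sign, replacing each $\sum_i b_i^{j}$ by its power-mean lower bound does not obviously move every term in the required direction, whereas in your approach the only power sum that appears is $\sum_i n_i^d$, and it enters with the correct sign.
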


\begin{proof}
Suppose we have $b_i$ objects of type $i$ for all $1 \leq i \leq k$.  Then we have $\binom{b_i}{d}$ subsets consisting entirely of objects of type $i$.  Thus the total number of subsets $P$ that contain at least one pair of objects of different types is
\begin{align*}
P &= \binom{n}{d} - \sum_{i=1}^{k}{\binom{b_i}{d}}\\
  &= \frac{1}{d!}\left(n(n-1)\cdots(n-d+1) - \sum_{i=1}^{k}{b_i(b_i-1)\cdots(b_i-d+1)}\right).
\end{align*}
Consider the polynomial $\phi(x) = x(x-1)\cdots(x-d+1) = e_1x + \cdots + e_dx^d$.  We rewrite our expression for $P$ in terms of $\phi$,
\begin{align*}
P &= \frac{1}{d!}\left(\phi(n) - \sum_{i=1}^{k}\phi(b_i)\right) \\
  &= \frac{1}{d!}\left(\phi(n) - \left(e_1\sum_{i=1}^{k}{b_i} + e_2\sum_{i=1}^{k}{b_i^2} + \cdots + e_d\sum_{i=1}^{k}{b_i^d}\right)\right).
\end{align*}
By the power means inequality,
\[
\frac{n}{k} = \frac{1}{k}\sum_{i=1}^{k}{b_i} \leq \left(\frac{1}{k}\sum_{i=1}^{k}{b_i^\nu}\right)^{1/\nu} \qquad \text{for all $\nu \geq 1$},
\]
and thus
\[
\left(\frac{n^\nu}{k^{\nu-1}}\right) \leq \sum_{i=1}^{k}{b_i^\nu}.
\]
We apply this bound to our expression for $P$ to yield the desired result,
\begin{align*}
P &\leq \frac{1}{d!}\left(\phi(n) - \left(e_1n + e_2\frac{n^2}{k} + \cdots + e_d\frac{n^d}{k^{d-1}}\right)\right) \\
  &= \frac{1}{d!}\left(n(n-1)\cdots(n-d+1) - k\cdot \frac{n}{k}\left(\frac{n}{k}-1\right)\cdots\left(\frac{n}{k}-d+1\right)\right) \\
  &\leq \frac{1}{d!}\left(n^d - k\left(\frac{n}{k}\right)^d\right) \\
  &= \frac{n^d}{d!}\left(1-\frac{1}{k^{d-1}}\right). \qedhere
\end{align*}
\end{proof}
With our lemma in hand, we now prove Theorem \ref{thm1}.  We proceed via contradiction.  Suppose that for some $m \geq 2$ and some norm $||\cdot||$ on $\mathbb{R}^m$, there exists an $\varepsilon > 0$ such that
$$
  \lim_{\lambda \rightarrow \infty} \left( \inf \left\{C_{\vr} : \vr \in \mathbb{N}^m,\; \vr \text{ normalized}, \;||\vr|| \geq \lambda\right\} \right) = 1 - \frac{1}{k^{m-1}} + \varepsilon.
$$
We assume without loss of generality that $\varepsilon < \dfrac{1}{k^{m-1}}$.  Our limit implies that there is some $\lambda_0 \in \mathbb{R}$ such that for all normalized $\vr \in \mathbb{N}^m$ with $||\vr|| \geq \lambda_0$ we have
\begin{equation} \label{Nlimit}
\liminf_{N \rightarrow \infty} \frac{1}{N}\sum_{i=0}^{N-1} \delta(i+r_1,i+r_2,\ldots,i+r_m) \geq 1 - \frac{1}{k^{m-1}} + \frac{\varepsilon}{2}.
\end{equation}
We define $\rho(\vr) = \max{\{r_j\}} - \min{\{r_j\}}$ to be the \emph{range} of $\vr$ and note that $\rho(\vr) = r_m$ whenever $\vr$ is normalized.  Let $\boldsymbol{r^*} = (0,\ldots,0,1) \in \mathbb{R}^m$ and let $p$ be an integer such that $p||\boldsymbol{r^*}|| \geq \lambda_0$.  Then whenever $\vr$ is normalized with $\rho(\vr) \geq p$, we have $||\vr|| \geq ||p\boldsymbol{r^*}|| = p||\boldsymbol{r^*}|| \geq \lambda_0$.  Hence, for all normalized $\vr$ with $\rho(\vr) \geq p$, we can pick $n_{\vr} \in \mathbb{N}$ by \eqref{Nlimit} such that for all $N \geq n_{\vr}$, we have
\begin{equation}\label{nrlimit}
  \frac{1}{N}\sum_{i=0}^{N-1}\delta(i+r_1,i+r_2,\ldots,i+r_m) \geq 1 - \frac{1}{k^{m-1}} + \frac{\varepsilon}{3}.
\end{equation}
To construct our counterexample, we ensure that we have selected $p$ such that
\begin{equation}\label{ppick}
p \geq m,
\end{equation}
and then pick $q \in \mathbb{N}$ such that the following both hold:
\begin{align}
(a)& \qquad q > \frac{18m^2(m-1)}{\varepsilon}\text{;} \label{qpick0} \\
(b)& \qquad q^{m-1} > \frac{9m(m-1)p^{m-1}}{\varepsilon}\text{.} \label{qpick1}
\end{align}
Since there are finitely many normalized $\vr \in \mathbb{N}^m$ with $p \leq \rho(\vr) \leq q$, we can then pick an $n \in \mathbb{N}$ such that the following both hold:
\begin{align}
(a)& \qquad n \geq n_{\vr} \text{ for all normalized $\vr$ with } p \leq \rho(\vr) \leq q \text{.} \label{npick0} \\
(b)& \qquad n > \frac{18qm!}{\varepsilon} \text{.} \label{npick1}
\end{align}

Now, for any set $U \subset \mathbb{N}$ with $|U| = m$, there is a unique normalized vector $\vr^U$ and integer offset $\mu(U)$ such that the vector $\vr^U + \mu(U)\mathbf{1}$ is an ordering of the elements of $U$.  We write $\delta(U)$ to denote the correlation coefficient associated to this vector, namely $\delta(U) = \delta(r_1^U + \mu(U), r_2^U + \mu(U), \ldots, r_m^U + \mu(U))$.  We also write $\rho(U) = \max{(U)} - \min{(U)}$ for the range of $U$.  It follows that $\rho(U) = \rho(\vr^U) = r_m^U$, and $\mu(U) = \min{(U)}$.  With these definitions in hand, we consider the following sum, which will be counted in two different ways to achieve our contradiction:
$$
S = \sum_{a=0}^{n-1}{\left(\sum_{\substack{U \subseteq \{a,\ldots,a+q-1\} \\ |U| = m}}{\delta(U)}\right)} \text{.}
$$

We first use Lemma~\ref{lem1} to bound $S$ from above.  The sum
$$ \sum_{\substack{U \subseteq \{a,\ldots,a+q-1\} \\ |U| = m}}{\delta(U)} $$
counts the number of subsets of $m$ elements from the multiset $\left[x_a, x_{a+1}, \ldots , x_{a+q-1}\right]$ that contain at least one pair of distinct symbols of the $k$ possible symbols.  Thus Lemma~\ref{lem1} applies, yielding
\begin{equation} \label{upperbound}
S \leq \sum_{a=0}^{n-1}{\frac{q^m}{m!}\left(1-\frac{1}{k^{m-1}}\right)} = \frac{nq^m}{m!}\left(1-\dfrac{1}{k^{m-1}}\right).
\end{equation}

Next, we will attempt to bound $S$ from below by expressing it in terms of partial sums of the form seen in \eqref{nrlimit}.  Our first goal will be to rearrange this sum according to the multiplicity of $\delta(U)$ for each $U$.  Sets $U$ will be subsets of $\{a,\ldots,a+q-1\}$ for more values of $a$ if they have lower range, so we sort the terms according to the value of $\rho(U)$, yielding
\begin{equation*}
S = \sum_{b=m-1}^{q-1}{\sum_{a=0}^{n-1}{\left(\sum_{\substack{U \subseteq \{a,\ldots,a+q-1\} \\ |U| = m \\ \rho(U) = b}}{\delta(U)}\right)}}.
\end{equation*}

For a given $U \subset \{0,\ldots,n+q-2\}$ with $|U| = m$, we have $U \subseteq \{a,\ldots,a+q-1\}$ if and only if $\min{(U)} \geq a$ and $\max{(U)} \leq a+q-1$.  Thus $U \subseteq \{a,\ldots,a+q-1\}$ for precisely those $a$ with $\mu(U) + \rho(U) - (q-1) \leq a \leq \mu(U)$.  However, when we rearrange our sum, we must count only those $a$ which also lie in the range $\{0,\ldots,n-1\}$.  We rewrite our sum as
\begin{equation*}
S = \sum_{b=m-1}^{q-1}{\left(\sum_{\substack{U \subseteq \{0,\ldots,n+q-2\} \\ |U| = m \\ \rho(U) = b}}{\left(\sum_{a=\max{\{\mu(U) + \rho(U) - (q-1),0\}}}^{\min{\{\mu(U), n-1\}}}{\delta(U)}\right)}\right)}.
\end{equation*}
We drop all terms containing elements less than $q$ or greater than $n-1$.  All the sets $U$ which remain will have $\mu(U) + \rho(U) - (q-1) \geq 0$ and $\mu(U) \leq n-1$, such that
\begin{align*}
S &\geq \sum_{b=m-1}^{q-1}{\left(\sum_{\substack{U \subseteq \{q,\ldots,n-1\} \\ |U| = m \\ \rho(U) = b}}{\left(\sum_{a=\mu(U) + \rho(U) - (q-1)}^{\mu(U)}{\delta(U)}\right)}\right)}\\
  &= \sum_{b=m-1}^{q-1}{\left(\sum_{\substack{U \subseteq \{q,\ldots,n-1\} \\ |U| = m \\ \rho(U) = b}}{\left((q-\rho(U))\delta(U)\right)}\right)}\\
  &= \sum_{b=m-1}^{q-1}{\left((q-b)\sum_{\substack{U \subseteq \{q,\ldots,n-1\} \\ |U| = m \\ \rho(U) = b}}{\delta(U)}\right)}.\\
\end{align*}
We now need to add back some of the terms we dropped and subtract away appropriate compensation.  We can choose $U \subseteq \{0,\ldots,n-1\}$ with $|U| = m$, $\rho(U)=b$ and $U \nsubseteq \{q,\ldots,n-1\}$ by picking $\min{(U)} \in \{0,\ldots,q-1\}$, taking $\max{(U)} = \min{(U)} + b$, and then choosing the remaining $m-2$ elements from $\{\min{(U)}+1, \ldots, \min{(U)} + b-1\}$.  There are $q\binom{b-1}{m-2}$ ways of doing this.  It is convenient to instead use $qb^{m-2}$ as an upper bound for this quantity; we then use the fact that $\delta(U) \in \{0,1\}$ to write
\begin{align*}
S &\geq \sum_{b=m-1}^{q-1}{\left(\left(q-b\right)\left(\left(\sum_{\substack{U \subseteq \{0,\ldots,n-1\} \\ |U| = m \\ \rho(U) = b}}{\delta(U)}\right) - qb^{m-2}\right)\right)}\\
  &> \sum_{b=m-1}^{q-1}{\left((q-b)\sum_{\substack{U \subseteq \{0,\ldots,n-1\} \\ |U| = m \\ \rho(U) = b}}{\delta(U)}\right)} - q^{m+1}.\\
\end{align*}
In a similar manner, we add back more terms so that we may consider all $U \subseteq \{0,\ldots,n+q-1\}$ with $|U| = m$ and $\rho(U)=b$, and subtract off another multiple of $q^{m+1}$ to compensate,
\begin{equation*}
S > \sum_{b=m-1}^{q-1}{\left((q-b)\sum_{\substack{U \subseteq \{0,\ldots,n+q-1\} \\ |U| = m \\ \rho(U) = b}}{\delta(U)}\right)} - 2q^{m+1}.
\end{equation*}
We now associate each set $U$ to its sorted vector $\vr^U + \mu(U)\mathbf{1}$ and group them according to their $\vr^U$ values.  Since we count each subset of $\{0,\ldots,n+q-1\}$ having range $\leq q-1$, we are certain to include $\vr + i\mathbf{1}$ for every normalized $\vr$ of range $\leq q-1$ and every offset $i$ from $0$ to $n$.  We drop any other terms and ignore those $\vr$ with $\rho(\vr) < p$ (recalling \eqref{ppick}, where we ensured that $p \geq m$), leaving us with
\begin{equation*}
S > \sum_{b=m-1}^{q-1}{\left((q-b)\sum_{\substack{\vr \in \mathbb{N}^m \\ \vr \text{ normalized} \\ \rho(\vr) = b}}{\sum_{i=0}^n{\delta(\vr+i\mathbf{1})}}\right)} - 2q^{m+1}.
\end{equation*}
Finally, we may use \eqref{nrlimit} to bound the inner sums from below, since for all $\vr$ with $\rho(\vr) \geq p$ we have $n \geq n_{\vr}$ by \eqref{npick0}.  We then simply count the number of normalized $\vr$ vectors of each range, obtaining
\begin{align}
S &> \sum_{b=p}^{q-1}{\left((q-b)\sum_{\substack{\vr \in \mathbb{N}^m \\ \vr \text{ normalized} \\ \rho(\vr) = b}}{n\left(1 - \frac{1}{k^{m-1}} + \frac{\varepsilon}{3}\right)}\right)} - 2q^{m+1}\nonumber\\
  &= n\left(1 - \frac{1}{k^{m-1}} + \frac{\varepsilon}{3}\right)\sum_{b=p}^{q-1}{\left((q-b)\binom{b-1}{m-2}\right)} - 2q^{m+1}\nonumber\\
  &\geq \frac{n}{(m-2)!}\left(1 - \frac{1}{k^{m-1}} + \frac{\varepsilon}{3}\right)\sum_{b=p}^{q-1}{\left((q-b)(b-m)^{m-2}\right)} - 2q^{m+1}.\label{final0}
\end{align}

We simplify and evaluate the remaining sum to get
\begin{align*}
\sum_{b=p}^{q-1}{\left((q-b)(b-m)^{m-2}\right)} &\geq \sum_{b=p}^{q-1}{\left((q+m-b)(b-m)^{m-2}\right)}-mq^{m-1}\\
                                                &\geq \sum_{b=p}^{q+m}{\left((q+m-b)(b-m)^{m-2}\right)}-2mq^{m-1}\\
                                                &= \sum_{b=p-m}^q{\left((q-b)b^{m-2}\right)}-2mq^{m-1}\\
                                                &\geq \sum_{b=0}^q{\left((q-b)b^{m-2}\right)}-2mq^{m-1}-qp^{m-1}\\
                                                &= q\sum_{b=0}^q{b^{m-2}}-\sum_{b=0}^{q-1}{b^{m-1}}-(2m+1)q^{m-1}-qp^{m-1}\\
                                                &\geq q\int_0^q{b^{m-2}\,\mathrm{d}b}-\int_0^q{b^{m-1}\,\mathrm{d}b}-2mq^{m-1}-qp^{m-1}\\
                                                &= \frac{q^m}{m(m-1)}-2mq^{m-1}-qp^{m-1}.
\end{align*}

We substitute this back into \eqref{final0} to obtain
\begin{equation} \label{finalS}
S > \frac{nq^m}{m!}\left(1 - \frac{1}{k^{m-1}} + \frac{\varepsilon}{3}\right) - 2q^{m+1} - \frac{2mnq^{m-1}}{(m-2)!} - \frac{nqp^{m-1}}{(m-2)!}.
\end{equation}

What remains is to eliminate the three leftover terms on the right hand side with the bounds we used when selecting $q$ and $n$.  First, by 
\begin{equation} \label{final1}
\left(\frac{nq^m}{m!}\right)\left(\frac{\varepsilon}{9}\right) > \frac{2mnq^{m-1}}{(m-2)!}.
\end{equation}

Second, by \eqref{qpick1}, we also picked $q$ such that
\begin{equation} \label{final2}
\left(\frac{nq^m}{m!}\right)\left(\frac{\varepsilon}{9}\right) > \frac{nqp^{m-1}}{(m-2)!}.
\end{equation}

Third, by \eqref{npick1}, we picked $n$ such that
\begin{equation} \label{final3}
\left(\frac{nq^m}{m!}\right)\left(\frac{\varepsilon}{9}\right) > 2q^{m+1}.
\end{equation}

Adding \eqref{final1}, \eqref{final2}, and \eqref{final3} together, we get
\begin{equation*}
\left(\frac{nq^m}{m!}\right)\left(\frac{\varepsilon}{3}\right) > 2q^{m+1} + \frac{2mnq^{m-1}}{(m-2)!} + \frac{nqp^{m-1}}{(m-2)!}
\end{equation*}
and we substitute this into \eqref{finalS} to obtain
\begin{equation*}
S > \frac{nq^m}{m!}\left(1 - \frac{1}{k^{m-1}}\right)
\end{equation*}
which contradicts \eqref{upperbound}, proving the desired result. \qed

\section{Proof of Theorem~\ref{thm2}}\label{sec4}
Suppose, for our sequence, that there exists some $m \geq 2$, $\vr \in \mathbb{N}^m$, and $d > 0$ such that $D_{\vr}^d > C_{\vr}$.  Let $\varepsilon = D_{\vr}^d - C_{\vr}$ and pick $p \in \mathbb{N}$ such that
\begin{equation*}
p > \frac{2dD_{\vr}^d}{\varepsilon} \text{.}
\end{equation*}
Then by our definition of $C_{\vr}$, there is some $n \geq p$ such that
\begin{equation*}
\frac{1}{n}\sum_{i=0}^{n-1}\delta(i+r_1,\ldots,i+r_m) < C_{\vr} + \frac{\varepsilon}{2} \text{.}
\end{equation*}
Dividing $n$ by $d$, we let $n = ad + b$, where $a$ and $b$ are non-negative integers and $b < d$.  Then rearranging our expression and applying the definition of $D_{\vr}^d$ yields:
\begin{align*}
C_{\vr} &> \frac{1}{n}\sum_{i=0}^{n-1}\delta(i+r_1,\ldots,i+r_m) - \frac{\varepsilon}{2}\\
        &= \frac{1}{n}\left(\sum_{i=0}^{a-1}\sum_{j=id}^{id+d-1}\delta(j+r_1,\ldots,j+r_m) + \sum_{i=ad}^{ad+b-1}\delta(j+r_1,\ldots,j+r_m)\right) - \frac{\varepsilon}{2}\\
        &\geq \frac{1}{n}\left(\sum_{i=0}^{a-1}\left(dD_{\vr}^d\right) + \sum_{i=ar}^{ar+b-1}\delta(j+r_1,\ldots,j+r_m)\right) - \frac{\varepsilon}{2}\\
        &\geq \frac{adD_{\vr}^d}{n} - \frac{\varepsilon}{2}\\
        &\geq D_{\vr}^d - \frac{dD_{\vr}^d}{n} - \frac{\varepsilon}{2} \text{.}
\end{align*}
However, since
\begin{equation*}
n \geq p > \frac{2dD_{\vr}^d}{\varepsilon},
\end{equation*}
we then have
\begin{equation*}
\frac{dD_{\vr}^d}{n} < \frac{\varepsilon}{2},
\end{equation*}
and substituting this into the above yields
\begin{equation*}
C_{\vr} > D_{\vr}^d - \frac{\varepsilon}{2} - \frac{\varepsilon}{2}= D_{\vr}^d - \varepsilon= C_{\vr} \text{.}
\end{equation*}
Thus we have a contradiction, and so we have $D_{\vr}^d \leq C_{\vr}$ for all $\vr$ and $d$. \qed

\section{Proof of Theorem~\ref{thm3}}\label{sec4-LLL}

It is sufficient to show that for all integers $k, m \geq 2$ and all real numbers $\varepsilon > 0$,
there exist an integer $d_0$ and an infinite word $x = x_0 x_1 x_2 \cdots$ over a $k$-letter alphabet such
that for every integer $d > d_0$ and $i \geq 0$ there are
at least $(1 - {1 \over k^{m-1}} - \varepsilon)$ positions where
the $m$ words
$$x_i \cdots x_{i+d-1}, \ x_{i+d} \cdots x_{i+2d-1},\ \ldots, \ x_{i+(m-1)d} \cdots x_{i+md - 1}$$
do not all agree. We use the Lov\'asz local lemma to show the existence of finite words
of every sufficiently long length satisfying the condition.  The existence
of an infinite word then follows from the usual compactness argument.

Here is the statement of the Lov\'asz local lemma, as taken
from \cite[Chap.~5]{AS00}.

\begin{lem}
Let $A_1, A_2, \ldots, A_T$ be events in a probability space, with a dependency digraph $D = (S, E)$.
Suppose there exist real numbers $u_1, u_2, \ldots, u_T$ with
$0 \leq u_i < 1$ for $1 \leq i \leq T$ such that
\begin{equation}
 \Pr(A_i) \leq u_i \prod_{(i,j) \in E} (1-u_j)
\label{lll}
\end{equation}
for $1 \leq i \leq T$.
Then the probability that none of the events $A_1, A_2, \ldots, A_T$
occur is $\geq \prod_{1 \leq i \leq T} (1 - u_i)$.
\end{lem}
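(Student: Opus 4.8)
The plan is to prove this asymmetric form of the Lov\'asz local lemma by the standard induction, establishing a uniform conditional bound on each $\Pr(A_i)$ and then telescoping via the chain rule. The key auxiliary claim is that for every index $i$ and every set $S \subseteq \{1,\ldots,T\}$ with $i \notin S$,
\[
\Pr\Big(A_i \;\Big|\; \bigcap_{j \in S}\overline{A_j}\Big) \le u_i,
\]
which I would prove by induction on $|S|$. The base case $|S| = 0$ is immediate from hypothesis \eqref{lll}: since every factor $1 - u_j$ lies in $(0,1]$, we get $\Pr(A_i) \le u_i \prod_{(i,j)\in E}(1-u_j) \le u_i$.

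For the inductive step I would partition $S$ into $S_1 = \{\, j \in S : (i,j) \in E \,\}$, the out-neighbours of $i$ lying in $S$, and $S_2 = S \setminus S_1$, and then express the conditional probability as the ratio
\[
\Pr\Big(A_i \;\Big|\; \bigcap_{j\in S}\overline{A_j}\Big) = \frac{\Pr\big(A_i \cap \bigcap_{j\in S_1}\overline{A_j} \mid \bigcap_{j\in S_2}\overline{A_j}\big)}{\Pr\big(\bigcap_{j\in S_1}\overline{A_j} \mid \bigcap_{j\in S_2}\overline{A_j}\big)}.
\]
The numerator is at most $\Pr(A_i \mid \bigcap_{j\in S_2}\overline{A_j})$, and because $A_i$ is mutually independent of $\{A_j : j\in S_2\}$ (no edge of the dependency digraph joins $i$ to $S_2$), this equals $\Pr(A_i) \le u_i\prod_{(i,j)\in E}(1-u_j)$. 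For the denominator I would enumerate $S_1 = \{j_1,\ldots,j_r\}$, expand it as the product $\prod_{\ell=1}^r \Pr(\overline{A_{j_\ell}} \mid \bigcap_{t<\ell}\overline{A_{j_t}} \cap \bigcap_{j\in S_2}\overline{A_j})$, and apply the induction hypothesis to each factor, noting that every conditioning set here has size at most $|S|-1 < |S|$; this yields the lower bound $\prod_{\ell=1}^r(1-u_{j_\ell}) \ge \prod_{(i,j)\in E}(1-u_j)$, the last inequality holding because the full edge-product merely has additional factors in $(0,1]$. Dividing, the two products cancel and leave $\Pr(A_i \mid \bigcap_{j\in S}\overline{A_j}) \le u_i$, closing the induction.

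To finish, I would apply the chain rule together with the auxiliary claim taken at $S = \{1,\ldots,i-1\}$:
\[
\Pr\Big(\bigcap_{i=1}^T \overline{A_i}\Big) = \prod_{i=1}^T \Pr\Big(\overline{A_i} \;\Big|\; \bigcap_{j<i}\overline{A_j}\Big) \ge \prod_{i=1}^T (1 - u_i).
\]
The step that demands the most care is the inductive step: one must invoke exactly the mutual-independence property encoded by the dependency digraph to evaluate the numerator, and must check that each conditioning event arising in the denominator expansion is indexed by a strictly smaller set so the hypothesis truly applies. One also needs every conditioning event to have positive probability for the ratio manipulations to be legitimate, but this is automatic inductively, since the claim forces $\Pr(\overline{A_j}\mid\cdots)\ge 1-u_j>0$ at each stage.
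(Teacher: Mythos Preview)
Your argument is the standard, correct proof of the asymmetric Lov\'asz local lemma. The paper, however, does not prove this lemma at all: it simply quotes the statement from \cite[Chap.~5]{AS00} and proceeds directly to apply it. So there is nothing to compare against; your proposal in fact supplies exactly the proof that Alon and Spencer give in the cited reference.
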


Let $A_{i,d}$ denote the event that there are $<t$ positions where
the $m$ words
$$x_i \cdots x_{i+d-1}, \ x_{i+d} \cdots x_{i+2d-1},\ \ldots, \ x_{i+(m-1)d} \cdots x_{i+md - 1} $$
do not all agree. Moreover, let $S$ be the space of all such events $A_{i,d}$ and $(S,E)$ the dependency digraph
specifying when one event is dependent on another, which corresponds to overlapping ranges of the word being constructed.

To evaluate $\Pr[A_{i,d}]$ it suffices to count the number of such strings.  First, we choose the values for the
symbols of the first string, $x_i, \ldots, x_{i+d-1}$, which can be done in $k^d$ ways.  Next, we
choose the precise number of positions $j$ in which the $m$ strings will
fail to agree, and the positions themselves.  This can be done
in $\sum_{0 \leq j < t} {d \choose j}$ ways.
For each such position, there are $k^{m-1} -1$ ways to choose the
symbols of the remaining $m-1$ strings in such a way that they do not
universally agree with the first string.  The remaining symbols in the
last $m-1$ strings are now completely determined, as they must agree with
the symbols in the corresponding position in the first string.
The total number of such strings is therefore
$$P = k^d \sum_{0 \leq j < t} {d \choose j} (k^{m-1} - 1)^j.$$
We therefore find
$$\Pr[A_{i,d}] =  {P \over {k^{md}}}
= \sum_{0 \leq j < t} {d \choose j} \left( {{k^{m-1}-1} \over {k^{m-1}}}
\right)^j \left( {1 \over {k^{m-1}}} \right)^{d-j}.$$

To estimate this sum we use the following classical estimate on
the tail of the binomial distribution, which is a version of
Hoeffding's inequality \cite{Ho63}:
\begin{lem}
Suppose $0 < p < 1$, and let $t, d$ be positive integers
with $t \leq dp$.  Then
$$\sum_{0 \leq j \leq t} {d \choose j} p^j (1-p)^{d-j} \leq
e^{-2(dp-t)^2/d}.$$
\label{hoeffding}
\end{lem}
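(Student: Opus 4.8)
The plan is to recognize the left-hand side as a lower-tail probability for a binomial distribution and then apply the exponential-moment (Chernoff) method. Let $X_1, \ldots, X_d$ be independent Bernoulli random variables with $\Pr[X_i = 1] = p$ and $\Pr[X_i = 0] = 1-p$, and set $X = X_1 + \cdots + X_d$, so that $X$ is binomial with mean $\mathbb{E}[X] = dp$. The sum in the lemma is exactly $\Pr[X \le t]$, and the hypothesis $t \le dp$ tells us we are estimating the lower tail. First I would center the variables by writing $Y_i = X_i - p$, so that $\mathbb{E}[Y_i] = 0$ and $-p \le Y_i \le 1-p$; the crucial feature is that each $Y_i$ ranges over an interval of length exactly $1$.

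For the tail estimate itself, fix any $s > 0$, write $\tau = dp - t \ge 0$, and apply Markov's inequality to the nonnegative variable $e^{-s(Y_1 + \cdots + Y_d)}$:
$$\Pr[X \le t] = \Pr[Y_1 + \cdots + Y_d \le -\tau] \le e^{-s\tau}\,\mathbb{E}\big[e^{-s(Y_1 + \cdots + Y_d)}\big] = e^{-s\tau}\prod_{i=1}^d \mathbb{E}\big[e^{-sY_i}\big],$$
where the last equality uses independence. The heart of the argument is then to bound each factor $\mathbb{E}[e^{-sY_i}]$. Here I would invoke Hoeffding's lemma: for a mean-zero random variable $Y$ taking values in an interval of length $\ell$, one has $\mathbb{E}[e^{sY}] \le e^{s^2\ell^2/8}$ for every real $s$. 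Since our $Y_i$ are mean-zero with $\ell = 1$, this gives $\mathbb{E}[e^{-sY_i}] \le e^{s^2/8}$, whence $\Pr[X \le t] \le e^{-s\tau + ds^2/8}$. Optimizing the exponent over $s > 0$ — the minimum occurs at $s = 4\tau/d$ — collapses it to $-2\tau^2/d$, yielding $\Pr[X \le t] \le e^{-2(dp-t)^2/d}$, which is the claim. (When $t = dp$ the right-hand side is $1$, so the bound is automatic.)

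The one genuine obstacle is the proof of Hoeffding's lemma, and this is where I would spend the effort. The standard route is a convexity argument: for $Y \in [a,b]$ the convexity of $x \mapsto e^{sx}$ gives the pointwise bound $e^{sY} \le \frac{b-Y}{b-a}e^{sa} + \frac{Y-a}{b-a}e^{sb}$, and taking expectations (using $\mathbb{E}[Y]=0$) reduces the problem to bounding $\psi(s) = \log\big(\frac{b}{b-a}e^{sa} - \frac{a}{b-a}e^{sb}\big)$. One checks $\psi(0) = 0$ and $\psi'(0) = 0$, and a direct computation shows $\psi''(s) \le (b-a)^2/4$ for all $s$ (the second derivative has the form of a variance of a Bernoulli distribution scaled by the squared range, and is therefore at most $\tfrac14(b-a)^2$). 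A second-order Taylor expansion then gives $\psi(s) \le s^2(b-a)^2/8$, completing the lemma. Everything else — the reduction to a binomial tail, the Chernoff step, and the single-variable optimization — is routine, so the technical weight rests entirely on this convexity estimate. Since the paper cites \cite{Ho63} for the inequality, one may alternatively quote it directly and omit the convexity computation.
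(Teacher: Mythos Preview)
Your proposal is correct and is the standard Chernoff--Hoeffding argument. Note, however, that the paper does not prove this lemma at all: it is simply stated as a ``classical estimate on the tail of the binomial distribution'' with a citation to Hoeffding's original paper \cite{Ho63}, so there is no proof in the paper to compare against. Your write-up therefore goes beyond what the paper does, supplying the details that the paper leaves to the reference.
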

If we now take $t = (1-{1 \over k^{m-1}} - \varepsilon) d$,
$p = {{k^{m-1} - 1} \over {k^{m-1}}}$, we obtain
$$\Pr[A_{i,d}] \leq e^{-2d \varepsilon^2}.$$
Now fix $n$, the length of the string.  We want none of the events
$A_{j,s}$ for $d_0 \leq s \leq n/m$, $0 \leq j \leq n-ms$, to take place.
Choose $u_{j,s} = e^{-{1 \over 2} s \varepsilon^2}$.
Then
\begin{eqnarray*}
\prod_{((i,d),(j,s)) \in E} (1-u_{j,s})
&=& \prod_{ {{i-ms+1 \leq j \leq i+md-1} \atop {0 \leq j \leq n-ms}} \atop {d_0 \leq s \leq n/m }} (1-u_{j,s})\\
&\geq & \prod_{s \geq d_0} (1-u_{j,s})^{md+ms-1}.
\end{eqnarray*}
Taking logarithms, we get
$$
\sum_{((i,d),(j,s)) \in E} \log (1-u_{j,s})
\geq
\sum_{s \geq d_0} (md+ms-1) \log (1-u_{j,s}).
$$

Provided $u_{j,s}$ is sufficiently small, we can bound $\log(1-u_{j,s})$ with $-c u_{j,s}$ for some constant
$c$. Hence we get
\begin{align*}
\sum_{s \geq d_0} (md+ms-1) &\log (1-u_{j,s})\\
& \geq \sum_{s \geq d_0} - (md+ms-1) c e^{-{1 \over 2} \varepsilon^2 s} \\
&= - (md-1) c \sum_{s \geq d_0} e^{-{1 \over 2} \varepsilon^2 s}
- mc \sum_{s \geq d_0} s e^{-{1 \over 2}  \varepsilon^2 s} \\
&= -(md-1) c\, {{e^{-{1 \over 2} \varepsilon^2(d_0-1)}} \over
{e^{{1 \over 2} \varepsilon^2} - 1}} -
mc  \,{{e^{-{1 \over 2} \varepsilon^2(d_0-1)} (1-d_0) +
d_0 e^{-{1 \over 2} \varepsilon^2(d_0-2)}} \over
	{ (e^{{1 \over 2} \varepsilon^2} -1)^2}}.
\end{align*}

Now choose $d_0$ large enough so that
$${{e^{-{1 \over 2} \varepsilon^2(d_0-1)}} \over
{e^{{1 \over 2} \varepsilon^2} - 1}} \leq  {{\varepsilon^2} \over {2mc}},$$
and also large enough so that
$$
{{e^{-{1 \over 2} \varepsilon^2(d_0-1)} (1-d_0) +
d_0 e^{-{1 \over 2} \varepsilon^2(d_0-2)}} \over
	{ (e^{{1 \over 2} \varepsilon^2} -1)^2}} \leq {{\varepsilon^2 d_0 } \over {2mc}}.$$
It follows that

\begin{eqnarray*}
\log \left( u_{i,d} \prod_{ ((i,d),(j,s)) \in E  } (1-u_{j,s})
\right) &\geq &
- {1 \over 2} \varepsilon^2 d -(md-1) c  {{\varepsilon^2} \over {2mc}}
- mc {{\varepsilon^2 d_0 } \over {2mc}} \\
& \geq &
- {1 \over 2} \varepsilon^2 d - {1 \over 2} \varepsilon^2 d - {1 \over 2} \varepsilon^2 d_0\\
& \geq & -{3 \over 2} \varepsilon^2 d   \\
& \geq & -2 \varepsilon^2 d  \\
& \geq & \log \Pr[A_{i,d}],
\end{eqnarray*}
as desired. Hence, by the Lov\'asz local lemma, it follows that the probability that none
of the events $A_{j,s}$ occur is $\geq \prod_{((i,d),(j,s))\in E}(1-u_{j,s})>0$,
and hence such a string of length $n$ exists. \qed

\section{Proof of Theorem~\ref{mtheo}}\label{sec5}

Before turning to the proof of Theorem~\ref{mtheo}, we need one auxiliary tool.
We rewrite the left-hand-side expression of~(\ref{propstat}) in terms of exponential sums.
As usual, set $\ex(z)=e^{2\pi\mathrm{i}z}$ for $z\in\mathbb{R}$.

\begin{prop}\label{use}
For any infinite word $x_0 x_1 x_2\cdots$ over $\{0,1,\ldots,k-1\}$ we have
$$  \sum_{n<N}\delta(n+r_1,n+r_2)=
  N\left(1-\frac{1}{k}\right)-\frac{1}{k}\sum_{1\leq h<k}\; \sum_{n<N} \ex\left(\frac{h}{k}(x_{n+r_2}-x_{n+r_{1}})\right).
$$
\end{prop}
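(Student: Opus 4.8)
The plan is to rewrite the equality indicator $\indic{x_{n+r_1}=x_{n+r_2}}$ pointwise as a complete additive character sum modulo $k$, and then sum the resulting expression over $n<N$. The whole statement is an algebraic identity, so the work is a single application of character orthogonality rather than any estimation.

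First I would note that directly from the definition of $\delta$, we have $\delta(n+r_1,n+r_2)=1-\indic{x_{n+r_1}=x_{n+r_2}}$ for every $n$. The key step is the roots-of-unity filter: for any two symbols $a,b\in\{0,1,\ldots,k-1\}$,
$$\indic{a=b}=\frac{1}{k}\sum_{h=0}^{k-1}\ex\!\left(\frac{h}{k}(b-a)\right).$$
Indeed, when $a=b$ every summand equals $\ex(0)=1$, so the right-hand side is $1$; when $a\neq b$, the quantity $b-a$ is a nonzero residue mod $k$, so $\ex((b-a)/k)$ is a nontrivial $k$th root of unity and the geometric sum $\sum_{h=0}^{k-1}\ex(h(b-a)/k)$ vanishes. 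Here it is important that $a$ and $b$ are drawn from a fixed system of representatives $\{0,\ldots,k-1\}$, so that congruence mod $k$ coincides with genuine equality.

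Applying this identity with $a=x_{n+r_1}$ and $b=x_{n+r_2}$, then isolating the $h=0$ term (which contributes $1/k$), gives
$$\delta(n+r_1,n+r_2)=\left(1-\frac{1}{k}\right)-\frac{1}{k}\sum_{1\leq h<k}\ex\!\left(\frac{h}{k}(x_{n+r_2}-x_{n+r_1})\right),$$
and summing over $n<N$ yields the claimed formula. There is no substantial obstacle here; the only point requiring a moment's care is the observation that equality of the stored symbols is equivalent to equality of their residues, which is precisely what the normalization $x_i\in\{0,\ldots,k-1\}$ guarantees. The genuine difficulty of the paper lies instead in bounding the character sums $\sum_{n<N}\ex\!\left(\frac{h}{k}(x_{n+r_2}-x_{n+r_1})\right)$ for generalized Rudin-Shapiro sequences, which this proposition sets up but does not yet address.
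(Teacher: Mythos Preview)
Your proof is correct and is essentially identical to the paper's: both use the orthogonality relation $\sum_{h=0}^{k-1}\ex(hu/k)=k\,\indic{k\mid u}$ to express $\indic{x_{n+r_1}=x_{n+r_2}}$, note that congruence mod $k$ coincides with equality because the symbols lie in $\{0,\ldots,k-1\}$, separate the $h=0$ term, and sum over $n<N$.
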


\begin{proof}
  The proof is based on the relation
  \begin{equation}\label{exp}
  \sum_{0\leq h<k} \ex\left(\frac{hu}{k}\right)=\left\{
                                                    \begin{array}{ll}
                                                      0, & \hbox{if $k\nmid u$;} \\
                                                      k, & \hbox{if $k \mid u$.}
                                                    \end{array}
                                                  \right.
  \end{equation}
  First, since $x_n\in\{0,1,2,\ldots,k-1\}$ we notice that $k\mid (x_{n+r_2}-x_{n+r_1})$ if and only if
  $x_{n+r_2}=x_{n+r_1}$. Therefore,
  \begin{align*}
    \sum_{n<N}\delta(n+r_1,n+r_2)&=N-\sum_{n<N} \; \frac{1}{k}  \sum_{0\leq h<k}\ex\left(\frac{h}{k}\left(x_{n+r_2}-x_{n+r_1}\right)\right)\\
     &\quad=N\left(1-\frac{1}{k}\right)-\sum_{n<N}\; \frac{1}{k} \sum_{1\leq h<k}\ex\left(\frac{h}{k}\left(x_{n+r_2}-x_{n+r_{1}}\right)\right).\qedhere
  \end{align*}
\end{proof}
In view of Theorem~\ref{mtheo} and Proposition~\ref{rprop} it suffices to show that
for all $1\leq h\leq k-1$ we have
\begin{equation}\label{condition}
 \sum_{n<N} \ex\left(\frac{h}{k}(\hat{a}(n+r)-\hat{a}(n))\right)=
  O_k\left(r \log\left(\frac{N}{r}\right)+r\right),
\end{equation}
where the implied constant only depends on $k$. Since $\ex(z+1)=\ex(z)$, the left-hand-side sum in~(\ref{condition}) can be rewritten in the form
\begin{equation}\label{condition2}
  \gamma_N(r)=\sum_{n<N} \ex\left(\frac{h}{k}(a(n+r)-a(n))\right).
\end{equation}
In the sequel we will need the generalized quantities
\begin{equation}\label{gammhat}
  \gamma_N(r,f)=\sum_{n<N} \ex\left(\frac{h}{k}(a(n+r)-a(n))\right)\ex\left(\frac{hf(n)}{k}\right),
\end{equation}
where $f:\; \mathbb{N}\rightarrow \mathbb{Z}$ is an arbitrary periodic function with period $k$.
We first show that for all such $f$ we have $\gamma_N(1,f)=O_k(\log N)$ for $N>k$. We will then use induction on $r$ to prove~(\ref{condition}), which in turn proves Theorem~\ref{mtheo}.

We follow the reasoning of Mauduit~\cite{M01}. Regarding~(\ref{gammhat}) we split the summation over $n<N$ up according to the residue class of $n$ modulo $k$. We obtain
\begin{align*}
  \gamma_{kN+j}(1,f)&=\sum_{n<kN+j}\ex\left(\frac{h}{k}(a(n+1)-a(n))\right)\ex\left(\frac{hf(n)}{k}\right)\\
 &= \sum_{i=0}^{k-1}\sum_{kn+i<kN+j}\ex\left(\frac{h}{k}(a(kn+i+1)-a(kn+i))\right)\ex\left(\frac{hf(i)}{k}\right).
\end{align*}
Thus,
\begin{align}
  \gamma_{kN+j}(1,f)&=\quad\sum_{n=0}^{k-1}\ex\left(\frac{h}{k}(a(n+1)-a(n))\right)\ex\left(\frac{hf(n)}{k}\right)\label{sum1}\\
 &\qquad +\sum_{u=0}^{j-1}\ex\left(\frac{h}{k}(a(kN+u+1)-a(kN+u))\right)\ex\left(\frac{hf(u)}{k}\right)\label{sum2}\\
 &\qquad +\sum_{u=0}^{k-2} \ex\left(\frac{hf(u)}{k}\right)\sum_{1\leq n<N}\ex\left(\frac{h}{k}(a(kn+u+1)-a(kn+u))\right)\label{sum3}\\
 &\qquad +\ex\left(\frac{hf(k-1)}{k}\right)\sum_{1\leq n<N}\ex\left(\frac{h}{k}(a(kn+k)-a(kn+k-1))\right).\label{sum4}
\end{align}
The sums~(\ref{sum1}) and~(\ref{sum2}) are trivially bounded by $k+j\leq 2k-1$. Concerning~(\ref{sum3}) we note that for $0\leq u\leq k-2$ we have
\begin{align*}
  \sum_{1\leq n<N} &\ex\left(\frac{h}{k}\left(a(kn+u+1)-a(kn+u)\right)\right)\\
  &=\sum_{1\leq n<N} \ex\left(\frac{h}{k}\left(a(n)+g(u+1,n)-a(n)-g(u,n)\right)\right)\\
  &=\sum_{1\leq n<N} \ex\left(\frac{h}{k}(g(u+1,n)-g(u,n))\right).
\end{align*}
By our assumption $g(u+1,n)-g(u,n)$ runs through a complete residue system mod $k$ for $1\leq n\leq k$, so this sum is bounded in modulus by $k/2$. Therefore,~(\ref{sum3}) is bounded by $k(k-1)/2$. Finally, we rewrite the sum in~(\ref{sum4}) in the form
\begin{align*}
  \sum_{1\leq n<N} &\ex\left(\frac{h}{k}\left(a(kn+k)-a(kn+k-1)\right)\right)\\
  &=\sum_{1\leq n<N} \ex\left(\frac{h}{k}\left(a(n+1)+g(0,n+1)-a(n)-g(k-1,n)\right)\right)\\
  &=\sum_{1\leq n<N} \ex\left(\frac{h}{k}\left(a(n+1)-a(n)\right)\right) \ex\left(\frac{h\hat{f}(n)}{k}\right),
\end{align*}
where $\hat{f}(n)=g(0,n+1)-g(k-1,n)$ is again periodic with period $k$ in $n$. Summing up, we get
\begin{equation}\label{gr1}
  \vert \gamma_{kN+j}(1,f)\vert \leq \vert \gamma_{N}(1,\hat{f}) \vert+\frac{k}{2}\,(k+3).
\end{equation}
From~(\ref{gr1}) and $\vert \gamma_{n}(1,f) \vert\leq k-1$ for $1\leq n\leq k-1$ and all $f$ we get by induction that for all $k$-periodic functions $f$ and all $N>k$,
\begin{equation}\label{gr2}
  \vert \gamma_N(1,f)\vert \leq \frac{k(k+3)}{2\log k}\; \log N +k-1.
\end{equation}
For our induction on $r$ to work, we need one more initial value, namely
$$\gamma_N(0,f)=\sum_{n<N} \ex\left(\frac{hf(n)}{k}\right)$$ which satisfies
\begin{equation}\label{crux}
  \vert \gamma_N(0,f) \vert \leq \frac{k}{2}, \quad \mbox{if }f(\{0,1,\ldots,k-1\})=\{0,1,\ldots,k-1\}.
\end{equation}

Now, let us consider the general case with $r=kM+i>0$ where $M\geq 0$ and $0\leq i\leq k-1$ but $(M,i)\neq (0,0)$. Similarly to~(\ref{sum1})--(\ref{sum4}) we have
\begin{align}
  \gamma&_{kN+j}(kM+i,f)=\nonumber\\
&\quad \sum_{u=0}^{k-2}\ex\left(\frac{hf(u)}{k}\right)\sum_{1\leq n<N} \ex\left(\frac{h}{k}(a(kn+u+kM+i)-a(kn+u))\right)\label{sum1a}\\
  &+\ex\left(\frac{hf(k-1)}{k}\right)\sum_{1\leq n<N} \ex\left(\frac{h}{k}(a(kn+k-1+kM+i)-a(kn+k-1))\right)\label{sum2a}\\
  &+O(1)\nonumber,
\end{align}
where the implied constant is bounded in modulus by $2k-1$.
We again need a close inspection of the two infinite sums~(\ref{sum1a}) and~(\ref{sum2a}). First, suppose $i\neq 0$. We
rewrite the sum~(\ref{sum1a}) in the form
\begin{align}
&\qquad\sum_{u=0}^{k-1-i} \ex\left(\frac{hf(u)}{k}\right)\sum_{1\leq n<N} \ex\Big(\frac{h}{k}(a(n+M)+g(u+i,n+M)
\nonumber\\
&\quad\qquad\qquad\qquad\qquad\qquad\qquad-a(n)-g(u,n) )\Big)\nonumber\\
&\quad +
\sum_{u=k-i}^{k-2} \ex\left(\frac{hf(u)}{k}\right)\sum_{1\leq n<N} \ex\Big(\frac{h}{k}(a(n+M+1)+g(u+i-k, n+M+1)\nonumber\\
&\quad\qquad\qquad\qquad\qquad\qquad\qquad-a(n)-g(u,n) )\Big)\nonumber\\
&=\quad
\sum_{u=0}^{k-1-i} \ex\left(\frac{hf(u)}{k}\right)\sum_{1\leq n<N} \ex\left(\frac{h}{k}\left(a(n+M)-a(n)\right)\right)\ex\left(\frac{hf_1(n)}{k}\right)\nonumber\\
&\quad +
\sum_{u=k-i}^{k-2} \ex\left(\frac{hf(u)}{k}\right)\sum_{1\leq n<N} \ex\left(\frac{h}{k}\left(a(n+M+1)-a(n)\right)\right)\ex\left(\frac{hf_2(n)}{k}\right),\nonumber
\end{align}
where
\begin{align*}
  f_1(n)&=g(u+i,n+M)-g(u,n), & \mbox{for } 0\leq u\leq k-1-i,\\
  f_2(n)&=g(u+i-k,n+M+1)-g(u,n), & \mbox{for } k-i\leq u\leq k-2.
\end{align*}
Using~(\ref{gammhat}) this yields
\begin{align}
  &\sum_{u=0}^{k-2}\ex\left(\frac{hf(u)}{k}\right)\sum_{1\leq n<N} \ex\left(\frac{h}{k}(a(kn+u+kM+i)-a(kn+u))\right)
 \label{ssz}\\
&=\sum_{u=0}^{k-1-i} \ex\left(\frac{hf(u)}{k}\right)\gamma_N(M,f_1)
+\sum_{u=k-i}^{k-2} \ex\left(\frac{hf(u)}{k}\right)\gamma_N(M+1,f_2)+O(1),\nonumber
\end{align}
where the $O(1)$-term comes from including $n=0$ into~(\ref{ssz}) and therefore is trivially bounded in modulus by $(k-i)+(i-1)=k-1$. Consider the second sum~(\ref{sum2a}) and let $i\neq 0$. Then
\begin{align*}
  &a(k(n+M+1)+i-1)-a(kn+k-1)\\
&\qquad \qquad = a(n+M+1)-a(n)+g(i-1,n+M+1)-g(k-1,n).
\end{align*}
Therefore,
\begin{align}
  \Big\vert \;\ex\left(\frac{hf(k-1)}{k}\right)&\sum_{1\leq n<N} \ex\left(\frac{h}{k}(a(kn+k-1+kM+i)-a(kn+k-1))\right)\;\Big\vert\nonumber\\
  &\leq \quad \left \vert\gamma_N(M+1,f_3)\right\vert +1\label{inot0},
\end{align}
where $f_3(n)=g(i-1,n+M+1)-g(k-1,n)$. Now, from~(\ref{sum1a}),~(\ref{sum2a}),~(\ref{ssz}) and~(\ref{inot0}) we see that
\begin{align}
  \vert \gamma_{kN+j}(kM+i,f)\vert &\leq \vert \gamma_N(M,f_1)\vert\cdot (k-i)+\vert \gamma_N(M+1,f_2)\vert\cdot (i-1)\nonumber\\
&\quad +\vert \gamma_N(M+1,f_3)\vert+1+(2k-1)+(k-1).\label{final1d}
\end{align}
Plugging in $M=0$, using~(\ref{gr2}) and~(\ref{crux}) and observing that $f_1(n)=g(u+i,n)-g(u,n)$ permutes $\{0,1,\ldots,k-1\}$ by assumption, we get
$$\vert \gamma_{kN+j}(i,f) \vert \leq \frac{k(k-1)(k+3)}{2\log k}\; \log N+\frac{k}{2}\,(2k+3),\qquad 1\leq i\leq k-1.$$
This implies that for $1\leq i\leq k-1$ and all functions $f$ with period $k$ we have
\begin{align}
  \vert \gamma_N(i,f)\vert &\leq \frac{k(k-1)(k+3)}{2\log k}\;\log\left(\frac{N}{k}\right)
+\frac{k}{2}\,(2k+3),\qquad N>k.\label{case}
\end{align}
On the other hand, if $0\leq u\leq k-1$ then
$$a(k(n+M)+u)-a(kn+u)= a(n+M)-a(n)+g(u,n+M)-g(u,n),$$
so by joining~(\ref{sum1a}) and~(\ref{sum2a}) in case that $i= 0$ we directly get
\begin{equation}\label{final2a}
  \vert \gamma_{kN+j}(kM,f)\vert \leq \sum_{u=0}^{k-1}\left(
  \vert \gamma_N(M,f_4)\vert+1\right)+(2k-1),
\end{equation}
where $f_4(n)=g(u,n+M)-g(u,n)$. Therefore, by~(\ref{gr2}) and~(\ref{final2a}) applied for $M=1$ we get
\begin{equation}\label{case2}
  \vert \gamma_N(k,f)\vert \leq \frac{k^2(k+3)}{2\log k}\;\log \left(\frac{N}{k}\right)+k^2+2k-1,
\end{equation}
provided $N>k$. Therefore, for all $N>k$,
\begin{equation}\label{case3}
  \vert \gamma_N(i,f)\vert \leq \frac{k^2(k+3)}{2\log k}\;\log \left(\frac{N}{k}\right)+k^2+2k-1,
\end{equation}
for the whole range $1\leq i\leq k$. We now start our induction on the parameter $r=kM+i$. We iterate~(\ref{final1d})
and~(\ref{final2a}) with~(\ref{case3}) as an initial value to obtain for $r=k^s+1, k^s+2,\ldots, k^{s+1}$
with $s\geq 0$ and for all $N>k^{s+1}$,
\begin{align*}
  \vert \gamma_{N}(r,f)\vert &\leq \frac{k^2(k+3)}{2\log k}\;k^s\; \log\left(\frac{N}{k^{s+1}}\right)
+k^s(k^2+2k-1)+\sum_{j=0}^{s-1} (3k-1)k^j\nonumber\\
  &\leq \frac{k^2(k+3)}{2\log k}\;k^s\;\log\left(\frac{N}{k^{s+1}}\right) + \frac{k^s(k^3+k^2)}{k-1}.
\end{align*}
This finishes the proof of Theorem~\ref{mtheo}.\qed

\section{Proof of Theorem~\ref{mtheo2}}\label{sec6}

For the proof of Theorem~\ref{mtheo2} it suffices to show that for all $1\leq h\leq k-1$
and $0<\gamma<1$ we have
\begin{equation}
  \sum_{n<N}\, \ex\left(\frac{h}{k}\left(a(n+r)-a(n)\right)\right)
\ll N^{\gamma}+ r N^{1-\gamma/d} +r N^{1-\gamma}\log\left(\frac{N^{\gamma/d}}{r}\right),\label{toest}
\end{equation}
where the implied constant only depends on $k$. We follow Kim~\cite[Section~4]{Ki99}, however suitably modifying the argument
to deal with the function $a$ not being $k$-additive in the usual sense. We need some more notation.
Let $b=(b_1,b_2,\ldots,b_d)$ and set $$P_b=\{n\in \mathbb{N}:\quad n\equiv b_i \mbox{ mod } p_i^{s_i}, \quad 1\leq i\leq d\},$$
where $s_i$ is the unique integer with $p_i^{s_i}\leq N^{\gamma/d}<p_i^{s_i+1}$. Since the  $p_i$'s denote different primes by assumption, we have
$$\#\{n\in\mathbb{N}: \quad n\in P_b\}=\frac{N}{\prod_{i=1}^d p_i^{s_i}}+O(1).$$
Further set
\begin{align*}
  \mathcal{B} &=\{(b_1,b_2,\ldots,b_d): \qquad 0\leq b_i<p_i^{s_i} \quad \mbox{ for } 1\leq i\leq d\},\\
  \mathcal{B}_0 &=\{(b_1,b_2,\ldots,b_d): \qquad 0\leq b_i<p_i^{s_i}-r \quad \mbox{ for } 1\leq i\leq d\}.
\end{align*}
Now, consider $n=n_i p_i^{s_i}+b_i$ where $0\leq b_i< p_i^{s_i}-r$. We may assume that $n_i\geq 1$, which is true
for most $n$, i.e. $N^{\gamma/d}\leq n< N$ (the error term of $N^{\gamma/d}$ is negligible in the final estimate).  Write
\begin{align*}
  b_i+r &= \beta'_{s_i-1} p_i^{s_i-1}+\beta'_{s_i-2} p_i^{s_i-2}+\cdots+\beta'_0,\\
  b_i &= \beta_{s_i-1} p_i^{s_i-1}+\beta_{s_i-2} p_i^{s_i-2}+\cdots+\beta_0
\end{align*}
where $\beta_\nu, \beta'_\nu\in\{0,1,\ldots,p_i-1\}$ for $0\leq \nu<s_i$. Furthermore, set
\begin{align*}
  v_i&=\max(j:\quad \beta_j'\neq 0, \quad 0\leq j\leq s_i-1),\\
  w_i&=\max(j:\quad \beta_j\neq 0, \quad 0\leq j\leq s_i-1),
\end{align*}
which indicate the uppermost nonzero coefficients in the expansions.
Then by~(\ref{sqrfa3}) we can rewrite $a_i(n+r)-a_i(n)$ in the form
\begin{align*}
    & a_i\left(n_i p_i^{s_i}+\beta'_{s_i-1} p_i^{s_i-1}+\cdots + \beta'_0\right)- a_i\left(n_i p_i^{s_i}+\beta_{s_i-1} p_i^{s_i-1}+\cdots + \beta_0\right)\\
   &\quad= a_i(n_i)+g_i(\beta'_{s_i-1},n_i)+\sum_{\nu=0}^{s_i-2} g_i(\beta'_\nu,\beta'_{\nu+1})\\
   &\quad\qquad - \left( a_i(n_i)+g_i(\beta_{s_i-1},n_i)+\sum_{\nu=0}^{s_i-2} g_i(\beta_\nu,\beta_{\nu+1})\right)\\
   &\quad= g_i(\beta'_{s_i-1},n_i)-g_i(\beta_{s_i-1},n_i)+\sum_{\nu=0}^{s_i-2} \left(g_i(\beta'_\nu,\beta'_{\nu+1})-g_i(\beta_\nu,\beta_{\nu+1})\right)\\
   &\quad=a_i(b_i+r)-a_i(b_i)+\mu_i(b_i,r,n_i),
\end{align*}
where
\begin{align*}
\mu_i(b_i,r,n_i)&=
g_i(\beta'_{s_i-1},n_i)-g_i(\beta_{s_i-1},n_i)+\sum_{\nu=v_i}^{s_i-2} g_i(\beta'_\nu,\beta'_{\nu+1})
   -\sum_{\nu=w_i}^{s_i-2}g_i(\beta_\nu,\beta_{\nu+1}).
\end{align*}
Consequently,
\begin{align}
  \sum_{n<N}\, \ex\left(\frac{h}{k}\left(a(n+r)-a(n)\right)\right)
& = \quad \sum_{n<N} \prod_{i=1}^d \ex\left(\frac{h}{k}\,c_i\left(a_i(n+r)-a_i(n)\right)\right)\nonumber\\
& = \quad \sum_{b\in \mathcal{B}_0}\sum_{\substack{n<N\\ n\in P_b}} \prod_{i=1}^d \ex\left(\frac{h}{k}\,c_i\left(a_i(b_i+r)-a_i(b_i)+\mu_i(b_i,r,n_i)\right)\right)
\nonumber\\
&\qquad\quad + \sum_{b\in \mathcal{B}\setminus \mathcal{B}_0}\sum_{\substack{n<N\\ n\in P_b}} \ex\left(\frac{h}{k}\,\left(a(n+r)-a(n)\right)\right),\nonumber
\end{align}
which equals
\begin{align}
&\quad\sum_{b\in \mathcal{B}} \prod_{i=1}^d \ex\left(\frac{h}{k}\,c_i\left(a_i(b_i+r)-a_i(b_i)\right)\right) \sum_{\substack{n<N\\ n\in P_b}}
\prod_{i=1}^d \ex\left(\frac{h}{k}\,c_i \mu_i(b_i,r,n_i)\right)\label{splitsum1}\\
&\qquad\quad + \sum_{b\in \mathcal{B}\setminus \mathcal{B}_0}\sum_{\substack{n<N\\ n\in P_b}} \left(\ex\left(\frac{h}{k}\left(a(n+r)-a(n)\right)\right)\right.\nonumber\\
&\qquad\qquad\qquad\qquad\qquad\quad\left.-\prod_{i=1}^d \ex\left(\frac{h}{k}\,c_i\left(a_i(b_i+r)-a_i(b_i)+\mu_i(b_i,r,n_i)\right)\right)\right).\label{splitsum2}
\end{align}
The second sum~(\ref{splitsum2}) is trivially bounded by (we follow~\cite{Ki99})
\begin{align}
  2\; \vert \mathcal{B}\setminus \mathcal{B}_0\vert \cdot \#\{n<N: \: n\in P_b\}&
   \ll \left(\sum_{i=1}^d \frac{r}{p_i^{s_i}}\prod_{j=1}^d p_j^{s_j}\right)\left(\frac{N}{\prod_{i=1}^d p_i^{s_i}}+O(1)\right)\nonumber\\
  &\ll rN^{1-\gamma/d},\label{zwei}
\end{align}
which is one of the error terms in the estimate. Now, consider the first sum~(\ref{splitsum1}). Let
$$\mathcal{B}^r=\{b\in \mathcal{B}: \quad v_i=w_i \mbox{ and } \beta_{v_i}=\beta_{w_i}' \mbox{ for all } 1\leq i\leq d\}.$$
Obviously, for every $b\in \mathcal{B}^r$ we have $\mu_i(b_i,r,n_i)=0$ for all $n<N$, $n\in P_b$.
We use a similar splitting as above, such that~(\ref{splitsum1}) satisfies
\begin{align*}
&\ll \quad \sum_{b\in \mathcal{B}} \prod_{i=1}^d \ex\left(\frac{h}{k}\,c_i\left(a_i(b_i+r)-a_i(b_i)\right)\right) \sum_{\substack{n<N\\ n\in P_b}}
1\\
&\qquad +2\,\vert \mathcal{B}\setminus \mathcal{B}^r\vert \left(\frac{N}{\prod_{i=1}^d p_i^{s_i}}+O(1)\right).
\end{align*}
Our next task is to establish a bound for $\vert \mathcal{B}\setminus \mathcal{B}^r\vert$. Let $p_i^{t_i}\leq r<p_i^{t_i+1}$. We have to count the number of $b_i$'s
with $0\leq b_i<p_i^{s_i}$ such that performing the addition $b_i+r$ gives rise to a carry propagation which is transported to the digits $\beta_{v_i}$ of $b_i$,
thus giving a contribution to $\mu_i(b_i,r,n_i)$. A necessary condition for this effect is that
$$\beta_{t_i+1}=\beta_{t_i+2}=\cdots=\beta_{s_i-2}=p_i-1.$$
Hence
\begin{align*}
  \vert \mathcal{B}\setminus \mathcal{B}^r\vert &\leq \sum_{i=1}^d \left(p_i^{t_i+1}+(s_i-1-t_i)p_i^{t_i+2}\right)\\
  &\ll\sum_{i=1}^d\left(r+p_ir\left(\frac{\log N^{\gamma/d}}{\log p_i}-\log r\right)\right)\\
  &\ll r \log N^{\gamma/d}.
\end{align*}
Summing up, we obtain
\begin{align*}
  &\sum_{n<N}\, \ex\left(\frac{h}{k}\left(a(n+r)-a(n)\right)\right)=\\
   &\qquad\sum_{b\in\mathcal{B}}\prod_{i=1}^d \ex\left(\frac{h}{k}\,c_i\left(a_i(b_i+r)-a_i(b_i)\right)\right)\sum_{\substack{n<N\\ n\in P_b}} 1
  +O\left(rN^{1-\gamma/d}+rN^{1-\gamma} \log N^{\gamma/d}\right)\\
  &= \quad \prod_{i=1}^d \sum_{b_i=0}^{p_i^{s_i}-1}\ex\left(\frac{h}{k}\,c_i\left(a_i(b_i+r)-a_i(b_i)\right)\right)\left(\frac{N}{\prod_{i=1}^{d}p_i^{s_i}}+O(1)\right)
  +O\left(rN^{1-\gamma/d}\right)\\
  &= \quad N \prod_{i=1}^d \frac{1}{p_i^{s_i}}\sum_{b_i=0}^{p_i^{s_i}-1}\ex\left(\frac{h}{k}\,c_i\left(a_i(b_i+r)-a_i(b_i)\right)\right)
  +O\left(N^\gamma+rN^{1-\gamma/d}\right).
\end{align*}
Finally, we show how to obtain the saving in the exponent, which again finishes the proof of Theorem~\ref{mtheo2}.
Since $c_i=p_1 p_2\cdots p_{i-1}$, we see that for every $h$ there exists
an index $l$ with $1\leq l\leq d$ and
$$\frac{h}{k}\;c_l=\frac{h p_1 p_2\cdots p_{i-1}}{p_1p_2\cdots p_d}=\frac{h'}{p_l},$$
with $\gcd(h',p_l)=1$. Applying Theorem~\ref{mtheo} with $k=p_l$ and estimating the other factors trivially, we get
$$\sum_{n<N}\, \ex\left(\frac{h}{k}\left(a(n+r)-a(n)\right)\right)
  \ll N^{1-\gamma} r\log \frac{N^{\gamma/d}}{r}+N^{1-\gamma} r+N^\gamma+r N^{1-\gamma/d},
$$
which gives the statement of the theorem. \qed

{\small

{\footnotesize
E.~Grant ({\tt egrant@uwaterloo.ca}), J.~Shallit ({\tt shallit@cs.uwaterloo.ca}),
T.~Stoll\\ ({\tt tstoll@cs.uwaterloo.ca}): Faculty of Mathematics, School of Computer Science, University of Waterloo, Waterloo, ON, Canada.}

\end{document}